\let\oldtocsection=\tocsection
\let\oldtocsubsection=\tocsubsection
\renewcommand{\tocsection}[2]{\hspace{0em}\oldtocsection{#1}{#2}}
\renewcommand{\tocsubsection}[2]{\hspace{2em}\oldtocsubsection{#1}{#2}}
\let\oldAA\AA
\renewcommand{\AA}{\text{\normalfont\oldAA}}
\def\Cal{\mathcal}
\def\A{{\Cal A}}
\def\C{{\Cal C}}
\def\P{{\Cal P}}
\def\S{{\Cal S}}
\def\bbr{{\Bbb R}}
\def\bbn{{\Bbb N}}
\def\bbc{{\Bbb C}}
\def\bbs{{\Bbb S}}
\def\sgn{{\hbox{\rm sgn}}}
\def\const{{\hbox{\rm const}}}
\def\rn{\bbr^n}
\def\part{\partial}
\def\intl{\int\limits}
\def\b{\beta}
\def\lang{\langle}
\def\rang{\rangle}
\def\Gam{\Gamma}
\def\a{\alpha}
\def\om{\omega}
\def\del{\delta}
\def\vp{\varphi}
\def\gam{\gamma}
\def\Gam{\Gamma}
\def\sig{\sigma}
\def\lam{\lambda}
\def\z{\zeta}
\def\e{\varepsilon}
\def\t{\tau}
\def\chi{{\bf 1}}
\def\snm1{\bbs^{n-1}}
\def\intl{\int\limits}
\def\Cs{\mathscr{C}}% c 1234}} Changed
\def\Cs{\mathscr{C c 1234}}
\def\cd{\stackrel{*}{\C}\!{}_{m, k}^\lam}
\def\sd{\stackrel{*}{\S}\!{}_{m, k}^\lam}
\def\cd0{\stackrel{*}{\C}\!{}_{m, k}^\lam}
\def\sd0{\stackrel{*}{\S}\!{}_{m, k}^\lam}
\def\ncd0{\stackrel{*}{\Cs}\!{}_{m, k}^\lam}
\newtheorem{theorem}{Theorem}[section]
\newtheorem{lemma}[theorem]{Lemma}
\theoremstyle{definition}
\newtheorem{definition}[theorem]{Definition}
\theoremstyle{remark}
\newtheorem{remark}[theorem]{Remark}
\numberwithin{equation}{section}
\theoremstyle{corollary}
\newtheorem{corollary}[theorem]{Corollary}
\newtheorem{proposition}[theorem]{Proposition}
\numberwithin{equation}{section}
\newcommand{\be}{\begin{equation}}
\newcommand{\ee}{\end{equation}}
\newcommand{\bea}{\begin{eqnarray}}
\newcommand{\eea}{\end{eqnarray}}
\newcommand{\Bea}{\begin{eqnarray*}}
\newcommand{\Eea}{\end{eqnarray*}}
\def\sideremark#1{\ifvmode\leavevmode\fi\vadjust{\vbox to0pt{\vss% the remark
 \hbox to 0pt{\hskip\hsize\hskip1em%                          will appear only
\vbox{\hsize2cm\tiny\raggedright\pretolerance10000%          on the side
 \noindent #1\hfill}\hss}\vbox to8pt{\vfil}\vss}}}%
\begin{document}

%Journal of Fourier Analysis and Applications

\title[On Fractional Integrals]
{On Fractional Integrals Generated by Radon Transforms over Paraboloids}

%\title[The Fourier Transform Approach ]
%{The Fourier Transform Approach to the Study of Fractional Integrals Generated by Radon Transforms over Paraboloids}

\author{ B. Rubin}

\address{Department of Mathematics, Louisiana State University, Baton Rouge,
Louisiana 70803, USA}
\email{borisr@lsu.edu}

\subjclass[2010]{Primary 42B20; Secondary 47G10, 44A12}

%\dedicatory{To the memory of Professor Nikolai Karapetovich Karapetyants
% on the occasion of his 80th birthday}

%\date{June 12, 2020 }

\keywords{Fractional integrals,  Radon transforms, norm estimates.}

\begin{abstract}

We apply the Fourier transform technique and a modified version of E. Stein's interpolation theorem  communicated by  L. Grafakos, to obtain sharp $L^p$-$L^q$ estimates for the   Radon transform and more general convolution-type fractional integrals    with the kernels having singularity on the paraboloids.

 \end{abstract}

\maketitle

%%%%%%%%%%%%%%%%%%

%\end{document}

\section{Introduction}

In the present paper we develop the Fourier transform approach to the   Radon-type transform
\be \label {ppar}
(P f)(x) =\intl_{\bbr^{n-1}} f(x'-y', x_n  -|y'|^2)\, dy' \ee
 and more general fractional integrals
   \be\label{poaxzc}
(P_{\pm}^{\,\a} f)(x) =\frac{1}{\Gam (\a)} \intl_{\bbr^n}(y_n -|y'|^2)_{\pm} ^{\a -1} \,f(x-y)\, dy,\quad Re \, \a > 0,\ee
 which extend  analytically  to $Re \, \a \le  0$ and have a singularity on the paraboloid $y_n=|y'|^2$.
  Here $x=(x', x_n) \in \rn$, $x' \in \bbr^{n-1}$ (similarly for $y$), and  $f$ is a sufficiently good function on $\rn$; see  Section 2 for notation.
   We call (\ref{ppar}) {\it the parabolic Radon transform}. This operator   was  also considered in \cite{Chr, Ru22, Ru22a}. The limiting case $\a=0$ in (\ref{poaxzc}) yields (\ref{ppar}).

 Sharp $L^p$-$L^q$ estimates for $P f$ can be obtained  by making use  of the  Oberlin-Stein theorem  for the usual Radon transform over affine hyperplanes in $\rn$ \cite {OS}.  They can also be derived from the similar estimates for  the  transversal Radon transform
 \be \label {bart}
(Tf)(x)=\intl_{\bbr^{n-1}} f(y', x_n + x'\cdot y')\, dy',\ee
 which was introduced by Strichartz \cite {Str} in his study of Radon transforms on the Heisenberg group; see   \cite{Ru12}, \cite [Section 4.13] {Ru15}, \cite{Ru22} for details.

  The purpose of the present  paper  is to present a straightforward Fourier transform approach to the $L^p$-$L^q$ estimates for the entire analytic family $\{ P_{\pm}^{\,\a}\}_{\a \in \bbc}$, taking into account that these operators have convolution structure. 
  
 This approach becomes possible thanks to a  version
of Stein’s interpolation theorem \cite{Ste56}   communicated by L. Grafakos \cite {Graf}. An advantage of this version is that the hypotheses of Stein's theorem, 
which make this theorem  applicable, are presented in \cite {Graf} in the more convenient terms of smooth compactly supported functions,  rather than in terms of   simple functions in \cite {Ste56}. We recall that simple functions are finite  linear combinations of the characteristic functions of disjoint compact sets;
see Section \ref{Inter} for details.  A close interpolation theorem in the multi-linear setting  was recently proved  by  Grafakos and Ouhabaz \cite{GO}.

 Some $L^p$-$L^q$ estimates for  localized modifications of (\ref{poaxzc}) with a
 smooth  cut-off function under the sign of integration were announced  by
 Littman \cite {Litt} and Tao \cite {Tao}, who referred to Stein's interpolation theorem  \cite {Ste56}.  A  distinctive feature of  $P_{\pm}^{\,\a} f$ in comparison with  \cite {Litt, Tao} is that our operators are not localized and their Fourier transforms can be explicitly computed. A roundabout approach to the study of operators (\ref{poaxzc}) via non-convolution-type fractional integrals generated by the operator $T$ was developed in \cite{Ru22a}.
 
It is interesting to note that implementation of the Fourier transform  shows that the analytic continuations of $ P_{\pm}^{\,\a}$ at $\a =(1-n)/2$ extend as unitary operators in $L^2 (\rn)$. This fact plays an important role  in the  inverse problems for elastic wave equations \cite{BK, NR}.

 To formulate our  main result, let  $S(\bbr^n)$ be the Schwartz space of infinitely differentiable functions on $\rn$, which are rapidly
decreasing together with their derivatives of all orders.

\begin {theorem} \label {lanseT} Let $1\le p,q \le \infty$; $\a_0=Re\, \a$. The operators $P_{\pm}^{\,\a}$,  initially defined on functions $\vp \in S (\rn)$ by analytic continuation, extend as linear bounded operators  from $L^p (\rn)\!$ to $\!L^q (\rn)$ if and only if
\be\label {wz2as}
\frac{1-n}{2} \le \a_0 \le 1, \qquad p=\frac{n+1}{n + \a_0}, \qquad q=\frac{n+1}{1-\a_0}.\ee
In particular, the Radon transform $P$ is bounded from $L^p (\rn)\!$ to $\!L^q (\rn)$ if and only if $p=(n+1)/n$ and $q=n+1$.
\end{theorem}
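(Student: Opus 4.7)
The strategy is to apply complex interpolation to the analytic family $\{P^z_\pm\}_{z\in\Pi}$ on the strip $\Pi=\{z\in\bbc:(1-n)/2\le \Re z\le 1\}$, with endpoint estimates on the two boundary lines derived by an explicit Fourier transform computation. The specific value $(1-n)/2$ for the left endpoint is not a choice but is forced by the structure of the Fourier multiplier.

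I would begin by computing the Fourier transform of the kernel $K_\pm^\alpha(y)=(y_n-|y'|^2)_\pm^{\alpha-1}/\Gamma(\alpha)$. The substitution $y_n\mapsto y_n+|y'|^2$ factors the integral into the classical one-dimensional Fourier transform of $t_\pm^{\alpha-1}/\Gamma(\alpha)$, which produces $|\xi_n|^{-\alpha}$ times a phase depending on $\sgn\xi_n$, and a Fresnel-type integral $\int_{\bbr^{n-1}}e^{-2\pi i(y'\cdot\xi'+|y'|^2\xi_n)}\,dy'=c_n|\xi_n|^{-(n-1)/2}e^{i\pi|\xi'|^2/(2\xi_n)}$ with a unit-modulus sign factor. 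The result takes the form
\[
\widehat{K_\pm^\alpha}(\xi)=c_\pm(\alpha)\,|\xi_n|^{-\alpha-(n-1)/2}\,e^{i\pi|\xi'|^2/(2\xi_n)}\,\sigma_\pm(\sgn\xi_n),
\]
where $c_\pm(\alpha)$ has at most exponential growth in $|\Im\alpha|$, to be tamed if necessary by multiplying the analytic family by a compensating entire factor $e^{Az^2}$.

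Next I establish the two endpoint bounds. On $\Re z=1$, $|K_\pm^z(y)|=|\Gamma(z)|^{-1}\chi_{\{\pm(y_n-|y'|^2)>0\}}$, so the trivial convolution estimate gives $\|P^z_\pm f\|_\infty\le|\Gamma(z)|^{-1}\|f\|_1$. On $\Re z=(1-n)/2$ one has $\Re(-z-(n-1)/2)=0$, so the modulus of the multiplier reduces to $|c_\pm(z)|$ and is constant in $\xi$; Plancherel then yields $\|P^z_\pm f\|_2\le|c_\pm(z)|\|f\|_2$. This is exactly why $(1-n)/2$ appears: it is the unique vertical line on which the multiplier is bounded. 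Applying the Grafakos version of Stein's interpolation theorem to the analytic family $z\mapsto P^z_\pm\varphi$, $\varphi\in S(\rn)$, and setting $\theta=2(1-\alpha_0)/(n+1)$ for $\Re z=\alpha_0$, one interpolates between $(L^1,L^\infty)$ at $\theta=0$ and $(L^2,L^2)$ at $\theta=1$ to obtain $1/p=1-\theta/2=(n+\alpha_0)/(n+1)$ and $1/q=\theta/2=(1-\alpha_0)/(n+1)$, as required.

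For necessity, I would use the parabolic dilation $\delta_t:(y',y_n)\mapsto(ty',t^2y_n)$, which preserves the paraboloid. A direct computation gives $P^\alpha_\pm(f\circ\delta_t)=t^{-2(\alpha-1)-(n+1)}(P^\alpha_\pm f)\circ\delta_t$, and demanding uniform boundedness in $t$ forces $1/p-1/q=(n-1+2\alpha_0)/(n+1)$. A complementary relation $1/p+1/q=1$ is extracted from a Knapp-type example on a parabolic cap $\{|y'|<r,\,|y_n|<r^2\}$ combined with reflection across the paraboloid (which swaps $P^\alpha_+$ with the dual of $P^\alpha_-$); the two linear equations together pin $(p,q)$ to the values in (\ref{wz2as}). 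The main obstacle I anticipate is the careful bookkeeping of $\alpha$-dependent sign factors in the Fourier computation and the verification of admissible growth of $c_\pm(\alpha)$ in $|\Im\alpha|$ on both boundaries of $\Pi$, so that Stein's theorem applies with uniform constants; the interpolation step itself is then mechanical.
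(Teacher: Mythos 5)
Your sufficiency argument is essentially the paper's own proof: the same explicit Fourier multiplier (a constant times $|\xi_n|^{-\alpha-(n-1)/2}$ times a unimodular chirp $e^{ci|\xi'|^2/\xi_n}$), the same two endpoint estimates ($L^2$-$L^2$ on $\Re\,\alpha=(1-n)/2$ by Plancherel, $L^1$-$L^\infty$ on $\Re\,\alpha=1$ from $|\Gamma(1+i\gamma)|^{-1}$), and the same Grafakos version of Stein's theorem; the reparametrization $\theta=2(1-\alpha_0)/(n+1)$ is the paper's $\theta=(2\alpha_0+n-1)/(n+1)$ read from the other end of the strip. Two things you label as mechanical are where the paper does real work: the multiplier identity cannot be run on $S(\bbr^n)$ because $q_{\alpha\pm}$ is singular on $\{\xi_n=0\}$, so the paper passes to the Semyanistyi-type pair $(\Phi,\Psi)$ and regularizes with $e^{-\e y_n}$; and hypothesis (B) of the interpolation theorem requires a growth bound on $\|P_{\pm}^{\,\alpha(z)}\varphi\|_{L^\infty(K)}$ for \emph{all} $z$ in the closed strip, not just control of the boundary constants, which is the content of the long estimate of $A^{(k)}_{x,\pm}(s)$ in Step 1. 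No compensating factor $e^{Az^2}$ is needed, since the admissible growth class $\exp(c\,e^{\tau|\gamma|})$, $\tau<\pi$, already absorbs $e^{\pi|\gamma|/2}$.

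The necessity half diverges from the paper and has a genuine gap. The parabolic dilation $\delta_t$ correctly yields the single relation $1/p-1/q=(n-1+2\alpha_0)/(n+1)$, but your Knapp set $\{|y'|<r,\ |y_n|<r^2\}$ is itself $\delta_r$ of a fixed box, so testing $P_{\pm}^{\,\alpha}$ on its characteristic function merely reproduces that same relation and cannot produce $1/p+1/q=1$. The reflection step does not rescue this: since the transpose of $P_{\pm}^{\,\alpha}$ is $JP_{\pm}^{\,\alpha}J$ (Lemma \ref{imit} and (\ref{pzax})), duality only says that the admissible set of $(1/p,1/q)$ on the scaling line is \emph{symmetric} about the point where $1/p+1/q=1$ --- a symmetry, not a constraint --- and H\"ormander's $q\ge p$ for convolution operators is likewise only an inequality (compare the Riesz potential, for which a whole segment of its scaling line is admissible). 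You need a genuinely two-parameter test family, e.g.\ a ball of radius $\delta$ paired against a $\delta R$-thickened paraboloid cap of radius $R$ with $\delta$ and $R$ independent, and you must make such a computation meaningful for $\Re\,\alpha\le 0$, where the kernel is only a distribution. The paper sidesteps all of this by conjugating $P_{\pm}^{\,\alpha}$ to the transversal operators $T_{\pm}^{\,\alpha}$ via $B_1,B_2$ (Lemma \ref{swawd}): $T_{\pm}^{\,\alpha}$ intertwines the full two-parameter dilation $(x',x_n)\mapsto(\lambda_1 x',\lambda_2 x_n)$, so two independent exponent relations --- hence the unique $(p,q)$ of (\ref{wz2as}) and, as a byproduct, the restriction $(1-n)/2\le\alpha_0\le 1$ --- drop out at once.
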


Although this statement is not new (cf. \cite[Theorem 1.1]{Ru22a}), our method of the proof, which relies on explicit computation of the Fourier
transform of $P_{\pm}^{\,\a} f$ combined with Grafakos' version of the interpolation theorem, might be of interest and instructive.

\noindent{\bf Plan of the Paper.}  Sections 2 and 3 contain notation and elementary properties of the convolution operators $P_{\pm}^{\,\a}$.
In Section 4 we  compute their Fourier multipliers in the framework of the corresponding distribution theory.
Section 5  contains auxiliary material  and detailed proof of Theorem \ref{lanseT}. Section 6  provides some comments and indicates possible  generalizations.  Some technical calculations are moved to Appendix.

\section{Notation}

 In the following, $x\!=\!(x_1, \ldots, x_{n-1}, x_n)\!=\!(x', x_n) \!\in \!\rn$.
The notation $C(\bbr^n)$,  $C^\infty (\bbr^n)$,
and $L^p (\bbr^n)$ for function spaces is standard;   $||\cdot ||_p =||\cdot ||_{L^p (\bbr^n)}$;
 $C_c^\infty (\bbr^n)$ is the space of compactly supported infinitely differentiable functions on $\rn$. The notation $\langle f,g\rangle$ for  functions $f$ and $g$ is used for the integral of the product of these functions.
 We keep the same notation  when $f$ is a distribution and $g$ is a test function.

 If $m=(m_1, \ldots,  m_n)$ is a multi-index, then $\partial^m = \partial_1^{m_1} \ldots     \partial_n^{m_n}$,  where  $\partial_i  =\partial/ \partial x_i$.
 The Fourier transform of a function
$f \in  L^1 (\bbr^n)$ is defined by
\be \label{ft} \hat f (\xi) = \intl_{\bbr^{ n}} f(x) \,e^{ i x \cdot \xi} \,dx, \qquad \xi\in \rn,
\ee where $x \cdot \xi = x_1 \xi_1 + \ldots + x_n\xi_n$.
 We denote by $S(\bbr^n)$
the Schwartz space of $C^\infty$-functions
which are rapidly
decreasing together with their derivatives of all orders. The space $S(\bbr^n) $ is equipped  with the topology generated by the sequence
of norms
\be\label{setop} ||\vp||_k=\max\limits_x(1+|x|)^k
\sum_{|j|\le k} |(\part^j\vp)(x)|, \quad k=0,1,2, \ldots .\ee

The Fourier transform is an automorphism of $S(\bbr^n)$.
The space of tempered distributions, which is dual to $S(\bbr^n)$, is denoted by $S'(\bbr^n)$.  The Fourier transform of a distribution $f\in S'(\bbr^n)$ is  a
distribution $\hat f\in S'(\bbr^n)$ defined by
\be\label{ftrd12}
\lang \hat f,\psi \rang =\lang f,\hat \psi \rang,\qquad \psi\in S(\bbr^n). \ee
The equality (\ref{ftrd12}) is equivalent to
\be\label{ftrd12V}
\lang \hat f,\hat\vp \rang =(2\pi)^n \lang f,\vp_1 \rang,\qquad \vp\in S(\bbr^n), \quad \vp_1 (x)=\vp (-x). \ee
The inverse Fourier transform of a function (or distribution) $f$ is denoted by $\check f$.

Given a  real-valued quantity
$X$ and a complex number $\lam$, we set $(X)_\pm^\lam= |X|^\lam$ if $\pm X>0$ and $(X)_{\pm}^\lam=0$, otherwise.
 All integrals are understood in the  Lebesgue sense.  The letter $c$, sometimes with subscripts, stands for a nonessential constant that may be different at each
occurrence.

\section {Elementary Properties of Parabolic Convolutions  $P_{\pm}^{\,\a} f$}

For $y=(y_1, \ldots, y_{n-1}, y_n)=(y', y_n) \in \rn$, we denote
%\be\label{ped1}
%p_{\a \pm}(y)=\frac{1}{\Gam (\a)} (y_n -|y'|^2)_{\pm} ^{\a -1}.\ee
%Here
\bea
p_{\a +}(y)&=&(y_n -|y'|^2)_{+} ^{\a -1}=\left \{
\begin{array} {ll}  (y_n -|y'|^2)^{\a -1} &\mbox{ if $y_n >|y'|^2$},\\
 0 &\mbox{ otherwise},\\
 \end{array}
\right. \nonumber\\
p_{\a -}(y)&=&(y_n -|y'|^2)_{-} ^{\a -1}=\left \{
\begin{array} {ll}  (|y'|^2 -y_n )^{\a -1} &\mbox{ if $y_n <|y'|^2$},\\
 0 &\mbox{ otherwise}.\\
 \end{array}
\right.\nonumber\eea
These functions have a singularity on the paraboloid $y_n=|y'|^2$.
The corresponding convolution operators
\be\label{poaxz}
(P_{\pm}^{\,\a} f)(x) = \intl_{\bbr^n} p_{\a \pm}(y)f(x-y)\, dy, \qquad Re \,\a >0,\ee
are generalizations of the   Riemann-Liouville fractional integrals \cite{SKM}
\be\label{poqa}
(I_{\pm}^{\,\a} f)(t)=\frac{1}{\Gam (\a)}\intl_{\bbr}  s_{\pm}^{\a -1} f(t-s) \,ds, \qquad t\in \bbr,\ee
and coincide with them if $n=1$. We call (\ref {poaxz})   {\it the parabolic  fractional integrals}.

\begin{lemma} \label {side} Let  $f \!\in S (\rn)$.  The following statements hold.

\noindent {\rm (i)} For each $x \in \rn$, $(P_{\pm}^{\,\a} f)(x)$
 extend as  entire functions of $\a$. Moreover,
\be\label {Daca} \lim\limits_{\a \to 0} (P_{\pm}^{\,\a} f)(x) =(Pf)(x),\ee
where $(P f)(x)$ is the parabolic Radon transform (\ref{ppar}).

\noindent {\rm (ii)}  If $Re \,\a  > 0$, then  for any multi-index $m$,
\be\label {Dacw} \partial^m P_{\pm}^{\,\a} f =P_{\pm}^{\,\a} \partial^m f.\ee

\noindent {\rm (iii)} For any positive integer $k$,
\be\label {Ded}
 P_{\pm}^{\,\a} f = (\pm 1 )^{k} P_{\pm}^{\,\a +k} \partial_n^k  f= (\pm 1 )^{k} \partial_n^k P_{\pm}^{\,\a +k} f. \ee
\end{lemma}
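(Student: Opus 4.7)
My plan is to treat the three parts in order, reducing each to a one-variable computation in the $y_n$ direction.

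For (i), I would change variables in (\ref{poaxz}) via $s=\pm(y_n-|y'|^2)$, yielding the fibered representation
\[
(P_\pm^{\,\a} f)(x)=\intl_{\bbr^{n-1}} (I_\pm^{\,\a} h_{y'})(x_n - |y'|^2)\,dy',
\]
where $h_{y'}(t)=f(x'-y',t)$ and $I_\pm^{\,\a}$ is the Riemann--Liouville integral (\ref{poqa}). The classical distributional theory shows that $s_\pm^{\,\a-1}/\Gam(\a)$ is entire in $\a$ with value $\delta(s)$ at $\a=0$, so $(I_\pm^{\,\a} h_{y'})(x_n-|y'|^2)$ is entire in $\a$ with value $f(x'-y',x_n-|y'|^2)$ at $\a=0$. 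The Schwartz decay of $f$ bounds any $\a$-derivative of the integrand uniformly in $y'$ and locally uniformly in $\a$, which allows Fubini together with a dominated convergence argument to pass analyticity through the outer integral in $y'$. Evaluating at $\a=0$ recovers (\ref{Daca}).

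Part (ii) is standard differentiation under the convolution integral: for $\mathrm{Re}\,\a>0$ the kernel $p_{\a\pm}/\Gam(\a)$ is locally integrable, and the Schwartz decay of $f$ justifies bringing $\part^m$ inside, producing (\ref{Dacw}). For (iii), I would begin on the right-hand side of (\ref{Ded}) with $\mathrm{Re}\,\a>0$, rewrite $(\part_n^k f)(x-y)=(-1)^k \part_{y_n}^k [f(x-y)]$, and integrate by parts $k$ times in $y_n$. The boundary terms at $y_n=|y'|^2$ vanish at each successive step because $\mathrm{Re}(\a+k-1-j)>0$ for $j=0,\ldots,k-1$, while the boundary terms at $y_n=\pm\infty$ vanish by Schwartz decay. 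Computing
\[
\part_{y_n}^k [(y_n-|y'|^2)_\pm^{\,\a+k-1}]=(\pm 1)^k\,\frac{\Gam(\a+k)}{\Gam(\a)}\,(y_n-|y'|^2)_\pm^{\,\a-1},
\]
the two factors of $(-1)^k$ coming from the chain rule and from integration by parts cancel, leaving precisely the factor $(\pm 1)^k$ demanded by (\ref{Ded}). This gives the first equality for $\mathrm{Re}\,\a>0$; extension to all $\a\in\bbc$ is then by analytic continuation using (i), and the second equality follows from (ii).

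The main obstacle is modest and technical: justifying the Fubini/dominated-convergence step in (i) when $\mathrm{Re}\,\a\le 0$, where the inner Riemann--Liouville integral is defined only via analytic continuation. This is handled by the distributional identity $s_\pm^{\,\a-1}/\Gam(\a)=\part_t^{\,k}[s_\pm^{\,\a+k-1}/\Gam(\a+k)]$, which for sufficiently large $k$ realises $I_\pm^{\,\a} h_{y'}$ as $I_\pm^{\,\a+k}(\part_t^{\,k} h_{y'})$ inside an absolutely convergent regime; the Schwartz bounds on the derivatives of $f$ then deliver the uniform decay in $y'$ required to pass to the limit.
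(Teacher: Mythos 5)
Your proposal is correct, but part (i) is organized differently from the paper, and the paper's ordering is worth noting because it removes your ``main obstacle'' entirely. You substitute $s=\pm(y_n-|y'|^2)$ fiberwise, obtain $(I_\pm^{\,\a}h_{y'})(x_n-|y'|^2)$ for each fixed $y'$, and only then integrate over $y'$; this forces you to prove uniform-in-$y'$ bounds in order to push analyticity and the limit $\a\to 0$ through the outer integral, which you patch by writing $I_\pm^{\,\a}h_{y'}=(\pm1)^kI_\pm^{\,\a+k}\partial_t^{\,k}h_{y'}$ (note that the factor $(\pm1)^k$ is missing from your displayed distributional identity in the ``$-$'' case, though this does not affect the convergence argument). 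The paper applies Fubini in the opposite order: it integrates over $y'$ first, producing a single smooth, rapidly decreasing function $A_{x,\pm}(s)=\intl_{\bbr^{n-1}}f(x'-y',x_n-|y'|^2\mp s)\,dy'$ with $A_{x,\pm}(0)=(Pf)(x)$, and then invokes the classical Gelfand--Shilov theory of $\Gam(\a)^{-1}\intl_0^\infty s^{\a-1}A(s)\,ds$ exactly once; no uniformity in $y'$ is needed, and the only estimate required is the rapid decay of $A_{x,\pm}$, which is the Appendix computation. Parts (ii) and (iii) essentially coincide with the paper: your explicit $k$-fold integration by parts, with the boundary terms at $y_n=|y'|^2$ killed because $\mathrm{Re}\,(\a+k-1-j)>0$ for $j\le k-1$, is exactly what the paper means by ``integration by parts'' for the first equality in (\ref{Ded}); for the second equality you invoke (ii) at the shifted index $\a+k$, whereas the paper factors $P_\pm^{\,\a+k}=I_\pm^{\,k}P_\pm^{\,\a}$ and uses $(\pm\partial_n)^kI_\pm^{\,k}=\mathrm{id}$ --- your route is the more direct of the two. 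Both arguments prove (\ref{Ded}) for $\mathrm{Re}\,\a>0$ and extend it to all $\a$ by analytic continuation, as you note.
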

\begin{proof} To prove {\rm (i)}, we have
 \bea
(P_{\pm}^{\,\a} f)(x)\!\!&=&\!\!\frac{1}{\Gam (\a)}\intl_{\bbr^{n-1}} dy'\intl_{-\infty}^\infty  (y_n -|y'|^2)_{\pm}^{\a -1}  f (x' -y', x_n -y_n)\, dy_n \quad \nonumber\\
\label {008d}&=&\!\!\frac{1}{\Gam (\a)}\intl_0^\infty \!s ^{\a -1} A_{x,\pm} (s)\,ds; \\
A_{x,\pm} (s)\!\!&=&\!\!\intl_{\bbr^{n-1}} \!\!\! f (x' -y', x_n  - |y'|^2 \mp s)\, dy'.\nonumber\eea
Because the functions $A_{x,\pm} (s)$ are smooth, rapidly decreasing, and satisfy $A_{x,\pm} (0)=(Pf)(x)$, the result follows; cf. \cite [Chapter I, Section 3.2]{GS1}, \cite [Section 2.5]{Ru15}.
 In  {\rm (ii)} we simply differentiate under the sign of integration.  The first equality in (\ref {Ded})  can be obtained using integration by parts. The second equality is the result of differentiation:
 \[
 (\pm 1 )^{k} \partial_n^k P_{\pm}^{\,\a +k} f= (\pm 1 )^{k} \partial_n^k I_{\pm}^{k}P_{\pm}^{\,\a} f=P_{\pm}^{\,\a} f.\]
\end{proof}
\begin{remark}\label {Dacqws} The formula (\ref{Ded}) gives an explicit  expression of the   analytic continuation  of $P_{\pm}^{\,\a} f$ from the domain $Re \,\a  > 0$ to  $Re \,\a  > - k$. For example, for any positive integers $\ell$ and $k>\ell$, setting $\a=-\ell$, we have
\be\label {Dacws}
P_{\pm}^{\,-\ell} f = (\pm 1 )^{k} \partial_n^k P_{\pm}^{\,k-\ell} f.\ee
Note also that by Fubini's theorem,
\be\label{poeqad}
 P_{\pm}^{\,\a} I_{\pm}^{\,\b} f =  I_{\pm}^{\,\b} P_{\pm}^{\,\a} f = P_{\pm}^{\,\a +\b}f, \qquad Re \,\a> 0, \quad  Re \,\b > 0,\ee
where $I_{\pm}^{\,\b}$ are applied in the last variable. Similarly,  for $Re \,\a >0$,
\be\label{psoeqa}
 P_{\pm}^{\,\a} f =  I_{\pm}^{\,\a} P f,\ee
 where $P$ is the Radon transform (\ref{ppar}).  Here it is assumed that $f$ is good enough, so that the change of the order of integration is well justified.
\end{remark}

\subsection{The Dual Fractional Integrals}\label {kiaq}

For $y=(y', y_n) \in \rn$ and $Re \, \a >0$,  denote
\be\label{ped1}
p^*_{\a \pm}(y)=\frac{1}{\Gam (\a)} (y_n +|y'|^2)_{\pm}^{\a -1}= p_{\a \mp}(-y).\ee
The set of singularities of this function is the  paraboloid $y_n=-|y'|^2$ in the negative half-space $y_n \le 0$.
The corresponding convolutions
\be\label{poax}
\stackrel{*}{P}{\!}^\a_{\pm} f = p^*_{\a \pm} \ast f\ee
are called  the {\it dual parabolic  fractional integrals}. The name is motivated by the following lemma.
\begin{lemma}\label {imit} For $Re \,\a >0$,
\be\label{krf1} \lang P^{\,\a}_{\pm} f, \vp \rang =\lang f, \stackrel{*}{P}{\!}^\a_{\mp} \vp \rang,\ee
 provided that either side of this equality exists in the Lebesgue sense.
\end{lemma}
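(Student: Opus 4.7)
The identity is essentially Fubini's theorem combined with the reflection $y\mapsto -y$, so the plan is short and computational, not conceptual.

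First I would unfold the definitions. Writing
\[
\lang P^{\,\a}_{\pm} f, \vp \rang = \intl_{\rn} (P_{\pm}^{\,\a} f)(x)\,\vp(x)\,dx = \frac{1}{\Gam(\a)}\intl_{\rn}\intl_{\rn} (y_n-|y'|^2)_\pm^{\a-1}\,f(x-y)\,\vp(x)\,dy\,dx,
\]
the strategy is to swap the order of integration, translate in $x$, and then reflect in $y$. Under the Lebesgue-integrability hypothesis on one of the sides, the double integral is absolutely convergent (on the assumed side via Tonelli applied to $|f|$ and $|\vp|$), which justifies the use of Fubini's theorem.

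Next I would make the substitution $z=x-y$ in the inner $x$-integral, obtaining
\[
\frac{1}{\Gam(\a)}\intl_{\rn} f(z)\left(\intl_{\rn} (y_n-|y'|^2)_\pm^{\a-1}\,\vp(z+y)\,dy\right)dz.
\]
Then the decisive step: change variables $w=-y$ in the inner integral. Since $dy=dw$ and $(-w_n-|w'|^2)_\pm^{\a-1}=(w_n+|w'|^2)_\mp^{\a-1}$ (because $(-X)_+=(X)_-$), the inner integral becomes exactly
\[
\frac{1}{\Gam(\a)}\intl_{\rn}(w_n+|w'|^2)_\mp^{\a-1}\,\vp(z-w)\,dw=(\stackrel{*}{P}{\!}^\a_{\mp}\vp)(z),
\]
using the definition $p^*_{\a\mp}(w)=\Gam(\a)^{-1}(w_n+|w'|^2)_\mp^{\a-1}=p_{\a\pm}(-w)$ from \eqref{ped1}. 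Substituting back yields $\lang f,\stackrel{*}{P}{\!}^\a_{\mp}\vp\rang$, completing the proof.

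There is essentially no obstacle beyond bookkeeping. The only subtlety worth stating explicitly is the justification of Fubini: since the lemma hypothesizes that at least one of $\lang P_{\pm}^{\,\a} f,\vp\rang$ and $\lang f,\stackrel{*}{P}{\!}^\a_{\mp}\vp\rang$ exists as a Lebesgue integral, the same manipulation applied to $|f|$ and $|\vp|$ shows that the double integral of the absolute value of the integrand is finite, so Fubini applies to the signed integrand. The sign-swap $\pm\leftrightarrow\mp$ is the sole content of the identity and is produced solely by the reflection $y\mapsto -y$.
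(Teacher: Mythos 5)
Your proposal is correct and follows essentially the same route as the paper: Fubini, the translation $z=x-y$, and the reflection $y\mapsto -y$ producing $(-y_n-|y'|^2)_{\pm}^{\a-1}=(y_n+|y'|^2)_{\mp}^{\a-1}$, which is exactly the chain of substitutions in the paper's displayed computation. Your explicit remark that absolute convergence of one side (via Tonelli applied to $|f|$, $|\vp|$) licenses Fubini is the right justification of the hypothesis ``provided that either side exists in the Lebesgue sense,'' which the paper leaves implicit.
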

\begin{proof}
\bea
&&l.h.s.=\!\intl_{\rn} \!\vp (x) dx\!\intl_{\bbr^{n-1}}\!\! dy'\!\intl_{-\infty}^{\infty}(y_n -|y'|^2)_{\pm}^{\a -1} f(x'-y', x_n -y_n) \,dy_n\nonumber\\
&&=\intl_{\bbr^{n-1}}\!\! dz'\! \intl_{-\infty}^{\infty} \!f(z', z_n) d z_n\!\intl_{\bbr^{n-1}}\!\! dx'\!\intl_{-\infty}^{\infty}\!\! \vp (x', x_n) (x_n \!- \!z_n \!-\!|x'\! -\!z'|^2)_{\pm}^{\a -1} dx_n\qquad \nonumber\\
&&=\intl_{\rn} f(z) dz\!\intl_{\bbr^{n-1}}\!\! dy'\!\intl_{-\infty}^{\infty} \!\vp (z' \!-\!y', z_n\! -\!y_n) (-y_n \!-\!|y'|^2)_{\pm}^{\a -1} dy_n= r.h.s.\nonumber\eea
\end{proof}

Clearly,
\be\label{pzax}
\stackrel{*}{P}{\!}^\a_{\pm}= JP^{\,\a}_{\mp} J, \qquad (Jf)(x)=f(-x).\ee
An analogue of (\ref {Daca}) for $f \in S(\rn)$ is
\be\label {Dacs} \lim\limits_{\a \to 0} (\stackrel{*}{P}{\!}^\a_{\pm} f)(x) = (P^*f)(x),\ee
where $P^* f$ is the {\it dual parabolic Radon transform} defined by
\be\label {Dacq}
(P^*f)(x)=\intl_{\bbr^{n-1}} f(x'-y', x_n +|y'|^2)\, dy';\ee
cf. (\ref{ppar}).
By Fubini's theorem,
\be\label{poeqaq}
\stackrel{*}{P}{\!}^\a_{\pm} I_{\pm}^{\,\b} f =  I_{\pm}^{\,\b} \stackrel{*}{P}{\!}^\a_{\pm} f = \stackrel{*}{P}{\!}^{\a +\b}_{\pm} f, \qquad Re \,\a >0, \quad  Re \,\b >0,\ee
 if $f$ is good enough; cf. (\ref{poeqad}).  Similarly, for $Re \,\a >0$,
\be\label{psoeqa1}
\stackrel{*}{P}{\!}^\a_{\pm} f=I_{\pm}^{\,\a} P^* f.\ee

\section{Parabolic Convolutions and Distributions}

%\section{Parabolic Fractional  Integrals and Distributions}

In the following, we invoke  the  theory of distributions and the Fourier transform. Formally,
\be\label{pyy}
(p_{\a \pm} \ast f)^\wedge =\hat p_{\a \pm} \hat f.\ee
Our aim is to give  this equality precise meaning and compute $\hat p_{\a \pm}$.

\subsection{Some Preparations}

\begin{lemma}\label {ngr} If $Re \, \a >0$, then $p_{\a \pm}$ can be viewed as regular   tempered distributions, which extend as entire distribution-valued functions of $\a \in \bbc$. In particular, for any integer $k>0$ and  $\vp \in S(\rn)$,
\be\label {Dacw1}
\lang p_{\,\a \pm}, \vp \rang  = (\mp 1 )^{k} \intl_{\rn} p_{\,(\a +k)\pm} (y) \, (\partial_n^k  \vp)(y)\, dy, \qquad Re \,\a  > - k.\ee
 In the case  $\a =0$ we have
\[
p_{0 +}(y)= p_{0 -}(y)\stackrel {\rm def}{=}\del_P (y),\]
 where
\be\label {Dirac} \lang \del_P, \vp \rang =\intl_{\bbr^{n-1}} \vp (y', |y'|^2)\, dy'.\ee
\end{lemma}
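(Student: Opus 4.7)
My plan is to confirm that $p_{\,\a\pm}$ defines a regular tempered distribution for $\Re\,\a>0$, propagate this to all of $\bbc$ via an integration-by-parts identity, and then read off the value at $\a=0$ from that identity.

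For $\Re\,\a>0$, after the substitution $s=\pm(y_n-|y'|^2)$ used already in the proof of Lemma 3.1, the pairing becomes
\[
\lang p_{\,\a\pm}, \vp\rang = \frac{1}{\Gam(\a)}\intl_{\bbr^{n-1}}\!dy' \intl_0^\infty \!s^{\a-1}\, \vp(y', |y'|^2\pm s)\,ds.
\]
The Schwartz decay of $\vp$ gives absolute convergence and polynomial control in $y'$, so $p_{\,\a\pm}\in S'(\rn)$ as a regular distribution.

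For the identity (\ref{Dacw1}) I would integrate by parts in $y_n$ one step at a time. When $\Re\,\a>0$ the antiderivative $\pm(y_n-|y'|^2)_\pm^\a/\a$ vanishes on the paraboloid $y_n=|y'|^2$, and Schwartz decay kills the boundary contribution at infinity; absorbing $1/\a$ into $1/\Gam(\a+1)$ and iterating $k$ times produces $\lang p_{\,\a\pm}, \vp\rang = (\mp 1)^k \intl_{\rn} p_{\,(\a+k)\pm}(y)\,(\part_n^k\vp)(y)\,dy$.

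The right-hand side is defined and holomorphic whenever $\Re\,\a+k>0$, because $p_{\,(\a+k)\pm}$ is then a regular tempered distribution, $\part_n^k\vp\in S(\rn)$, and the one-dimensional inner integral $\frac{1}{\Gam(\a+k)}\intl_0^\infty s^{\a+k-1}(\cdots)\,ds$ is the standard Riemann--Liouville construction whose potential poles are cancelled by $1/\Gam$ (cf.\ \cite[Ch.~I, \S3.2]{GS1}, \cite[Sec.~2.5]{Ru15}). Since $k$ is arbitrary, $\a\mapsto\lang p_{\,\a\pm},\vp\rang$ extends to an entire function.

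Finally, specializing the identity at $\a=0$ with $k=1$, the distribution $p_{\,1\pm}(y)$ is just the indicator of $\{\pm(y_n-|y'|^2)>0\}$, so the fundamental theorem of calculus in $y_n$ yields $\intl_{\bbr^{n-1}}\vp(y',|y'|^2)\,dy'$ in both sign cases, the outer $\mp 1$ matching the orientation of the boundary. The main obstacle is the careful bookkeeping of signs and a clean statement of the one-dimensional analytic continuation uniformly in the auxiliary variable $y'$, both handled by standard Schwartz estimates.
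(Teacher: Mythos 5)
Your proposal is correct and follows essentially the same route as the paper: reduce to the one-dimensional integral $\frac{1}{\Gam(\a)}\int_0^\infty s^{\a-1}\vp(y',|y'|^2\pm s)\,ds$, obtain (\ref{Dacw1}) by integration by parts in $y_n$ (with the boundary term vanishing for $Re\,\a>0$), and control everything by Schwartz-norm estimates. The only cosmetic difference is that you use the identity (\ref{Dacw1}) itself both for the entire extension and for the $\a=0$ evaluation via the fundamental theorem of calculus, whereas the paper derives entirety from the classical Gelfand--Shilov continuation of $s_+^{\a-1}/\Gam(\a)$ applied to $A_\pm(s)$ and reads off $\lang\del_P,\vp\rang$ as the limit $A_\pm(0)$; the two are equivalent.
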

\begin{proof} If $Re \, \a >0$, then $(p_{\a \pm}, \vp)$ are absolutely convergent integrals that can be estimated by  norms (\ref{setop}); see  Appendix.
Furthermore, as in (\ref {008d}),
\[
\lang p_{\a \pm }, \vp \rang=\frac{1}{\Gam (\a)}\intl_0^\infty \!s ^{\a -1} A_{\pm} (s)\,ds, \quad A_{\pm} (s)\!=\!\!\intl_{\bbr^{n-1}} \!\!\! \vp (y', |y'|^2 \pm s)\, dy',\]
and therefore $p_{\a \pm} $ extend analytically  as  $S'$-distributions to all $\a \in \bbc$. The equality (\ref{Dacw1}) is a consequence of the integration by parts. In the case $\a =0$, as in (\ref{Daca}), we have
\[
 \lang p_{0 \pm}, \vp \rang =\lim\limits_{\a \to 0} \lang p_{\a \pm}, \vp \rang = \intl_{\bbr^{n-1}} \vp (y', |y'|^2)\, dy' = \lang\del_P, \vp \rang.\]
\end{proof}

 Note  that  $\del_P (y)$ differs from the delta distribution $\del (y_n -|y'|^2)$, which is  defined by a surface integral
\be\label {Diracs}
\lang \del (y_n -|y'|^2), \vp \rang =\intl_{y_n =|y'|^2} \vp (y) \, d\sig (y)\ee
and expresses through the integral over $\bbr^{n-1}$ with the corresponding Jacobian factor.

\begin{corollary} For any $\a\in \bbc$ and any positive integer $\ell$,
\be\label {Diracs1}
 (\pm 1 )^{\ell}\partial_n^{\ell} p_{\,\a \pm}= p_{\,(\a-\ell) \pm}.\ee
\end{corollary}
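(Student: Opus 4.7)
The plan is to reduce the claimed distributional identity to the classical integration-by-parts formula already packaged inside Lemma \ref{ngr}, and then finish off the general case by analytic continuation. Since Lemma \ref{ngr} essentially encodes the integration-by-parts relation $\langle p_{\a\pm},\vp\rangle=(\mp 1)^{k}\langle p_{(\a+k)\pm},\partial_n^{k}\vp\rangle$ as an identity between entire distribution-valued functions of $\a$, there is not much new analysis to do; the whole point is a careful shift of the parameter $\a$ and a tidy bookkeeping of signs.

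First, I would test the statement against a Schwartz function. For $\vp\in S(\rn)$, the definition of the distributional derivative gives
\[
\langle (\pm 1)^{\ell}\partial_n^{\ell}p_{\,\a\pm},\vp\rangle=(-1)^{\ell}(\pm 1)^{\ell}\langle p_{\,\a\pm},\partial_n^{\ell}\vp\rangle=(\mp 1)^{\ell}\langle p_{\,\a\pm},\partial_n^{\ell}\vp\rangle.
\]
So the target identity (\ref{Diracs1}) is equivalent to
\[
(\mp 1)^{\ell}\langle p_{\,\a\pm},\partial_n^{\ell}\vp\rangle=\langle p_{\,(\a-\ell)\pm},\vp\rangle\qquad\text{for all}\ \vp\in S(\rn).
\]

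Next, I would apply formula (\ref{Dacw1}) from Lemma \ref{ngr} with $\a$ replaced by $\a-\ell$ and $k=\ell$. Its range of validity $\Re(\a-\ell)>-\ell$, i.e.\ $\Re\,\a>0$, guarantees that $p_{\,\a\pm}$ on the right-hand side is a genuine locally integrable function, so everything reduces to absolutely convergent integrals:
\[
\langle p_{\,(\a-\ell)\pm},\vp\rangle=(\mp 1)^{\ell}\intl_{\rn}p_{\,\a\pm}(y)\,(\partial_n^{\ell}\vp)(y)\,dy=(\mp 1)^{\ell}\langle p_{\,\a\pm},\partial_n^{\ell}\vp\rangle.
\]
This is exactly the identity we need, so the corollary holds whenever $\Re\,\a>0$.

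Finally, to cover arbitrary $\a\in\bbc$, I would invoke analytic continuation. By Lemma \ref{ngr}, for each fixed $\vp\in S(\rn)$ the two functionals
\[
\a\mapsto\langle (\pm 1)^{\ell}\partial_n^{\ell}p_{\,\a\pm},\vp\rangle=(\mp 1)^{\ell}\langle p_{\,\a\pm},\partial_n^{\ell}\vp\rangle,\qquad \a\mapsto\langle p_{\,(\a-\ell)\pm},\vp\rangle
\]
are entire in $\a$. They agree on the half-plane $\Re\,\a>0$ by the previous step, hence on all of $\bbc$ by uniqueness of analytic continuation. This yields $(\pm 1)^{\ell}\partial_n^{\ell}p_{\,\a\pm}=p_{\,(\a-\ell)\pm}$ as $S'(\rn)$-distributions for every $\a\in\bbc$. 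The only genuine obstacle is a careful sign count, since $(\pm 1)^{\ell}(-1)^{\ell}=(\mp 1)^{\ell}$ has to cancel the $(\mp 1)^{\ell}$ produced by Lemma \ref{ngr}; there are no analytic difficulties beyond that.
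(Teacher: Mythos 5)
Your proof is correct and rests on the same key ingredient as the paper's, namely formula (\ref{Dacw1}) from Lemma \ref{ngr}. The only cosmetic difference is that the paper applies (\ref{Dacw1}) with a large auxiliary $k$ to both sides so that they visibly coincide as absolutely convergent integrals for every $\a$ with $\Re\a>-k$, whereas you first verify the identity for $\Re\a>0$ and then invoke uniqueness of analytic continuation; both routes are equally valid.
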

\begin{proof} Suppose that $Re \, \a > -k$, $k \in \bbn$, and let $\vp \in S(\rn)$. By (\ref{Dacw1}),
\bea
 \lang (\pm 1 )^{\ell}\partial_n^{\ell} p_{\,\a \pm}, \vp \rang &=&  (\mp 1 )^{\ell} \lang  p_{\,\a \pm}, \partial_n^{\ell}\vp \rang\nonumber\\
&=&  (\mp 1 )^{\ell +k}  \intl_{\rn} p_{\,(\a +k)\pm} (y) \, (\partial_n^{\ell +k}  \vp)(y)\, dy.\nonumber\eea
The same expression can be obtained for $\lang p_{\,(\a-\ell) \pm}, \vp \rang$ if we replace $\a$ by $\a -\ell$ and $k$ by $\ell +k$ in (\ref{Dacw1}).
\end{proof}

\begin{definition} \label {Digf} {\it Following Lemma \ref{ngr}, we define $P_{\pm}^{\,\a} \vp$ for any $\a \in \bbc$   as  convolutions of the $S'$-distributions $p_{\,\a \pm}$ with the test function $\vp \in S(\rn)$ by the formula}
\be\label {w3a1}  (P_{\pm}^{\,\a} \vp)(x) = \lang p_{\,\a \pm} (y), \vp (x - y)\rang .\ee
\end{definition}

\begin{lemma} \label {Digf1} Let $\a \in \bbc$ and  $\vp \in S(\rn)$. The following statements hold.

\noindent {\rm (i)} $(P_{\pm}^{\,\a} \vp)(x)$ are infinitely differentiable tempered functions on $\rn$. In particular, for any multi-index $m$,
\be\label {w3a2}
(\partial^m P_{\pm}^{\,\a} \vp)(x) =  \lang p_{\,\a \pm} (y),  \partial ^m_x \vp (x - y)\rang.\ee

\noindent {\rm (ii)} For any positive integer $\ell$,
 \be\label {w3as1}
 (\pm \partial_n)^{\ell} P_{\pm}^{\,\a} \vp=P_{\pm}^{\,\a-\ell} \vp.\ee
\end {lemma}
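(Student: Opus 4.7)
My plan is to derive both parts from standard distribution-theoretic bookkeeping, using the fact (established in Lemma \ref{ngr}) that $p_{\,\a \pm}$ are tempered distributions for every $\a\in\bbc$, together with the derivative identity in Corollary \ref{Diracs1}.

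For part (i), I would first note that for fixed $x\in\rn$ the translate $y\mapsto \vp(x-y)$ lies in $S(\rn)$, so the pairing in (\ref{w3a1}) is well defined. To get smoothness and the differentiation rule (\ref{w3a2}), the idea is to write the difference quotient
\[
h^{-1}\bigl[\vp(x+he_i - y) - \vp(x - y)\bigr]
\]
and verify that, as $h\to 0$, it converges to $\partial_{x_i}\vp(x-y)$ in the topology (\ref{setop}) of $S(\rn)$, uniformly on compact sets in $x$. Because $p_{\,\a \pm}$ is continuous on $S(\rn)$, this transfers the derivative inside the bracket; iteration yields (\ref{w3a2}) for any multi-index $m$. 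Temperedness then follows because $p_{\,\a\pm}$ has finite order, i.e.\ there exist $k$ and $C>0$ with $|\lang p_{\,\a\pm},\psi\rang|\le C\|\psi\|_k$ for all $\psi\in S(\rn)$. Using the elementary inequality $1+|y|\le (1+|x|)(1+|x-y|)$, the translate $\vp(x-\cdot)$ satisfies $\|\vp(x-\cdot)\|_k\le (1+|x|)^k\|\vp\|_k$, and the same works for any $\partial^m\vp$ in place of $\vp$. Combined with (\ref{w3a2}), this gives $|\partial^m P_{\pm}^{\,\a}\vp(x)|\le C_m(1+|x|)^k$, so $P_{\pm}^{\,\a}\vp\in C^\infty(\rn)$ is tempered together with all its derivatives.

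For part (ii), the plan is to apply (i) with $m=\ell e_n$ and then transfer the derivative to the distribution. Since $\partial_{x_n}\vp(x-y)=-\partial_{y_n}\vp(x-y)$, we obtain
\[
\partial_n^{\ell} P_{\pm}^{\,\a}\vp(x) = \lang p_{\,\a\pm}(y),\partial_{x_n}^{\ell}\vp(x-y)\rang = (-1)^{\ell}\lang p_{\,\a\pm}(y),\partial_{y_n}^{\ell}\vp(x-y)\rang,
\]
and by the definition of the distributional derivative this equals $\lang \partial_n^{\ell} p_{\,\a\pm}(y),\vp(x-y)\rang$. Corollary \ref{Diracs1} gives $\partial_n^{\ell} p_{\,\a\pm} = (\pm 1)^{\ell} p_{\,(\a-\ell)\pm}$, so
\[
\partial_n^{\ell} P_{\pm}^{\,\a}\vp(x) = (\pm 1)^{\ell}\lang p_{\,(\a-\ell)\pm}(y),\vp(x-y)\rang = (\pm 1)^{\ell}(P_{\pm}^{\,\a-\ell}\vp)(x).
\]
Multiplying both sides by $(\pm 1)^{\ell}$ and observing $(\pm 1)^{2\ell}=1$ yields the identity (\ref{w3as1}).

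The only genuinely delicate step is the convergence of the difference quotients in $S(\rn)$ (needed to justify differentiation inside the distributional bracket) and the accompanying seminorm control that produces polynomial bounds in $x$; everything else is algebraic bookkeeping with distributional derivatives and the previously proved corollary.
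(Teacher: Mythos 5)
Your proposal is correct and follows essentially the same route as the paper: part (ii) is the identical computation (transfer $\partial_{x_n}$ to $-\partial_{y_n}$, use the distributional derivative, then invoke (\ref{Diracs1})), and for part (i) you supply in full the standard $S'$-convolution argument (difference quotients converging in the topology (\ref{setop}), finite order of $p_{\,\a\pm}$, and the inequality $1+|y|\le(1+|x|)(1+|x-y|)$ for the polynomial growth) that the paper simply delegates to the cited references on tempered convolutions. No gaps.
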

 \begin{proof} The statement (i) mimics known facts for $S'$-convolutions;  see, e.g., \cite [p. 26, Lemma 2.5]{Es}, \cite [p. 84]{Vl}.
To prove (ii), by making use of (\ref{w3a1}) and (\ref{Diracs1}),
we have
\bea
 (\pm \partial_n)^{\ell} (P_{\pm}^{\,\a} \vp)(x)&=& (\pm 1 )^{\ell}  \lang p_{\,\a \pm} (y), (\partial/\partial x_n)^\ell \vp (x - y)\rang \nonumber\\
&=& (\mp 1 )^{\ell}  \lang p_{\,\a \pm} (y), (\partial/\partial y_n)^\ell \vp (x - y)\rang\nonumber\\
&=&  (\pm 1 )^{\ell} \lang \partial_n^{\ell} p_{\,\a \pm}, \vp (x - y)\rang= \lang p_{\,(\a-\ell) \pm}, \vp (x - y)\rang\nonumber\\
&=& (P_{\pm}^{\,\a-\ell} \vp)(x). \nonumber\eea
\end {proof}

All results of this subsection can be easily reformulated for the dual fractional integrals introduced in Subsection \ref{kiaq}.  Because the functions $p^*_{\a \pm}(y)$ differ from $p_{\a \pm} (y)$ only by a sign of $y_n$, they can be viewed as $S'$-distributions, which extend analytically to all $\a \in \bbc$. In particular, for $\a =0$,
\be\label{kss1}
p^*_{0 +}(y)= p^*_{0 -}(y)\stackrel {\rm def}{=}\del^*_p (y),\ee
where
\be\label {Diracd} \lang \del^*_p, \vp \rang=\intl_{\bbr^{n-1}} \vp (y', -|y'|^2)\, dy', \qquad \vp \in S (\rn).\ee

\subsection{The Fourier Transforms $\hat p_{\a \pm}$ and Spaces of the Semyanistyi Type}
 Following \cite[pp.  98, 137]{SKM}, we fix the branches of the analytic functions $(\pm ix_n)^{\lam}$, $\lam \in \bbc$,  by setting
\be\label {mza}
(\pm ix_n)^{\lam}=  \exp \left( \lam \log |x_n| \pm \frac{\lam \pi i}{2}\, \sgn \,x_n \right ),\ee
and denote
\[
\om_+ (x)\!=\! \pi^{(n-1)/2}\exp \left(\!-\frac{i |x'|^2}{4x_n}\right ),  \quad \om_- (x)\!=\!\om_+ (x) \exp \left(\!\frac{(n\!-\!1) \pi i}{2}\, \sgn \,x_n\!\right ),\]
\be\label {bdqa}
q_{\a \pm} (x)\!=\! (\mp ix_n)^{-\a-(n-1)/2} \om_{\pm} (x).\ee
Our aim is to show that
\be\label {yqzjr} \hat p_{\a \pm}=q_{\a \pm}\ee
in a suitable sense.

By the definition of the Fourier transform in $S'(\rn)$,  $\lang \hat p_{\a \pm}, \psi \rang= \lang p_{\a \pm}, \hat\psi \rang$, where $\psi \in S(\rn)$ and $\lang p_{\a \pm}, \hat\psi \rang$ is  understood as analytic continuation of the integral
$\int_{\rn}   p_{\a \pm} (y) \hat\psi (y) dy$ from the domain $Re \, \a >0$.  Thus,  to prove (\ref {yqzjr}), one may be tempted to show that
\[ \intl_{\rn}   p_{\a \pm} (y) \hat\psi (y) dy = \intl_{\rn}   q_{\a \pm} (x) \psi (x) dy\]
for some $Re \, \a >0$, for which both integrals are absolutely convergent.  However, these integrals may not exist simultaneously, rather than $\psi (x)$ vanishes at  $x_n = 0$. Note also that multiplication by $q_{\a \pm}$  does not preserve the space $S(\rn)$.

 To circumvent this obstacle, one can reduce the  space of the test functions in a suitable way and expand the corresponding  space of distributions.
An idea of this approach originated in the work of Semyanistyi \cite{Sem} who treated Fourier multipliers having singularity at a single point $x=0$. This idea was independently used by Helgason \cite [p. 162]{H65}. It was
extended by Lizorkin \cite {Liz} and later by Samko \cite{Sam77} to more general sets of singularities; see also \cite{Ru15, Sam,  SKM} and references therein.  In our case, described below,
the set of singularities is the  hyperplane $x_n=0$.

\begin{definition}\label {Semyanistyi}
{\it We denote by $\Psi(\bbr^n)$
   the subspace of functions $\psi \in S(\bbr^n)$
vanishing with all derivatives on the hyperplane $x_n=0$.
 Let also  $\Phi(\bbr^n)$  be the
  Fourier image of $ \Psi (\bbr^n)$.}\footnote{ In the cited literature, exept \cite{H65},  the notation $\Phi, \Psi$ is used in the case of singularity at a single point $x=0$. For the sake of simplicity, we keep the same notation, which does not cause any ambiguity.}
 \end{definition}

% \begin{proposition}  \label{jikl1} {\rm (cf. \cite[Proposition  4.141]{Ru15}) }
 %A  function $\vp \in S(\bbr^n) $ belongs to
%$\Phi (\bbr^n)$ if and only if
%  \be\label{kssi1} \intl_{\bbr} \vp(x', x_n)\, x_n^{k}
% \,dx_n=0 \qquad
% \forall k\in \{0,1,2,\ldots \}, \quad \forall x' \in\bbr^{n-1}.\ee
% \end{proposition}

\begin{proposition} \label{izsh} {\rm (see, e.g.,  \cite[Theorem 2.33]{Sam})}  The space $\Phi (\bbr^n)$ is dense in $L^p (\rn)$, $1<p<\infty$.
 \end{proposition}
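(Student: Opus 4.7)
The plan is to argue by duality, combining the Hahn--Banach theorem with the structure of tempered distributions supported on a hyperplane. Density of $\Phi(\bbr^n)$ in $L^p(\bbr^n)$ is equivalent to the following implication: if $g\in L^{p'}(\bbr^n)$, with $1/p+1/p'=1$, satisfies $\lang \vp,g\rang =0$ for every $\vp \in \Phi(\bbr^n)$, then $g=0$. Writing $\vp=\hat\psi$ with $\psi\in \Psi(\bbr^n)$ and invoking (\ref{ftrd12}), this hypothesis translates into $\lang \hat g,\psi\rang =0$ for every $\psi\in \Psi(\bbr^n)$, where $\hat g\in S'(\bbr^n)$ is the distributional Fourier transform of $g$.

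Next I would show that this annihilation forces $\supp \hat g\subseteq H$, where $H=\{\xi_n=0\}$. Since $\Psi(\bbr^n)$ is, by definition, the subspace of Schwartz functions vanishing to infinite order on $H$, any Schwartz function with support disjoint from $H$ already lies in $\Psi(\bbr^n)$, and a standard cutoff-in-$\xi_n$ construction shows that any test function supported away from a prescribed neighbourhood of $H$ can be approximated in $S(\bbr^n)$ by elements of $\Psi(\bbr^n)$. The classical structure theorem for tempered distributions supported on a hyperplane then yields a nonnegative integer $N$ and distributions $T_0,\ldots,T_N \in S'(\bbr^{n-1})$ such that
\[
\hat g(\xi)=\suml_{k=0}^N T_k(\xi') \otimes \del^{(k)}(\xi_n).
\]
Taking inverse Fourier transforms, and using that the inverse Fourier transform of $\del^{(k)}(\xi_n)$ in $\xi_n$ is a nonzero constant multiple of $x_n^k$, one obtains
\[
g(x',x_n)=\suml_{k=0}^N c_k\, \check T_k(x')\, x_n^k, \qquad c_k\ne 0,
\]
as an equality of tempered distributions.

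To finish, since $g\in L^{p'}(\rn)$, Fubini's theorem gives $g(x',\cdot) \in L^{p'}(\bbr)$ for almost every $x'\in \bbr^{n-1}$. The only polynomial in $x_n$ lying in $L^{p'}(\bbr)$ with $1<p'<\infty$ is identically zero, so comparing coefficients forces $\check T_k\equiv 0$ for every $k$; hence $\hat g=0$ and $g=0$, contradicting the Hahn--Banach assumption. I expect the main obstacle to be the careful construction of $\Psi$-approximations establishing $\supp \hat g\subseteq H$, namely, verifying that a test function vanishing in a neighbourhood of $H$ can be approximated by $\psi_j\in \Psi(\bbr^n)$ with control over all Schwartz seminorms (\ref{setop}). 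Once this technical point is in place, the structure theorem for distributions on a hyperplane and the elementary observation about polynomials in $L^{p'}$ render the remainder of the argument routine.
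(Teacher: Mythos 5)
The paper offers no proof of this proposition --- it is quoted verbatim from Samko's book --- so there is nothing in the text to compare against; your duality argument is, however, essentially the standard proof of such density statements for Lizorkin-type spaces, and it is correct in outline. Two points deserve tightening. First, the step $\supp \hat g\subseteq H$ needs no approximation argument at all: if $\chi\in C_c^\infty(\rn)$ has $\supp\chi\cap H=\emptyset$, then $\chi$ vanishes identically on the open neighbourhood $\rn\setminus\supp\chi$ of $H$, hence lies in $\Psi(\bbr^n)$ outright, and $\lang\hat g,\chi\rang=0$ already shows $\hat g$ vanishes on the open set $\rn\setminus H$. (The structure theorem then applies because tempered distributions have finite order, which is what makes the sum over $k$ finite and the $T_k$ tempered --- worth saying explicitly.) Second, the concluding step ``Fubini for a.e.\ $x'$, then compare coefficients'' is not quite legitimate as written, since the identity $g=\sum_k c_k\,\check T_k(x')\,x_n^k$ holds only in $S'(\rn)$ and the $\check T_k$ need not be functions, so $g(x',\cdot)$ cannot be identified with a polynomial for a.e.\ fixed $x'$. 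The clean fix is to pair in $x'$ first: for $\vp\in C_c^\infty(\bbr^{n-1})$, the function $G_\vp(x_n)=\int_{\bbr^{n-1}}g(x',x_n)\vp(x')\,dx'$ belongs to $L^{p'}(\bbr)$ by Minkowski's integral inequality and equals the polynomial $\sum_k c_k\lang\check T_k,\vp\rang\,x_n^k$ as a distribution in $x_n$; since $1<p'<\infty$, the only polynomial in $L^{p'}(\bbr)$ is the zero one, whence $\lang\check T_k,\vp\rang=0$ for all $k$ and all $\vp$, so $\hat g=0$ and $g=0$. With that adjustment the proof is complete, and it correctly isolates why $p>1$ is needed: for $p=1$ the constant term survives, consistent with the fact that every $\vp\in\Phi(\bbr^n)$ satisfies $\int_{\rn}\vp=0$.
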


Every function $f\in L^p (\rn)$, $1\le p\le \infty$, can be regarded as a regular  $\Phi'$-distribution.

\begin{proposition} \label{iqah}  {\rm (cf.  \cite[Lemma 3.11]{Ru96}, \cite{Yos})}  If $f\in L^p (\rn)$ and  $g \in L^q (\rn)$ , $(1\le p,q < \infty)$, coincide
as $\Phi'$-distributions, then they coincide almost everywhere on $\rn$.
 \end{proposition}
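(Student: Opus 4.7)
\medskip

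\noindent\textbf{Proof plan.} Set $h=f-g$. The plan is to identify $\hat h$ as a tempered distribution supported on $\{x_n=0\}$, conclude that $h$ is polynomial in $x_n$, and then rule out nonzero polynomials using the integrability hypotheses.

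First I would recast the hypothesis spectrally. Since $1\le p,q<\infty$, both $f$ and $g$ belong to $S'(\rn)$, hence so does $h$. The hypothesis $\lang h,\vp\rang =0$ for every $\vp\in \Phi(\rn)$, combined with $\Phi=\mathcal{F}(\Psi)$ and the Fourier reciprocity (\ref{ftrd12}), translates into $\lang \hat h,\psi\rang =0$ for every $\psi\in \Psi(\rn)$. Because $\Psi(\rn)$ contains every $\psi\in C_c^\infty(\rn)$ whose support avoids the hyperplane $\{x_n=0\}$, we deduce
$$\supp \hat h\subset \{x_n=0\}.$$

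Next I would invoke the structure theorem for tempered distributions supported on a hyperplane (an immediate adaptation of the Schwartz structure theorem for distributions with support at a point) to write
$$\hat h=\sum_{k=0}^{N}\del^{(k)}(\xi_n)\otimes T_k(\xi'),\qquad T_k\in S'(\bbr^{n-1}),$$
for some finite $N$. Equivalently, $\xi_n^{N+1}\hat h=0$ in $S'(\rn)$, so that $\partial_n^{N+1}h=0$ in $S'(\rn)$ after inverse Fourier transform. The elementary fact that a tempered distribution annihilated by $\partial_n^{N+1}$ is a polynomial of degree at most $N$ in $x_n$ with distributional coefficients in $x'$, together with the observation that $h\in L^p+L^q\subset L^1_{loc}$, produces locally integrable functions $F_0,\ldots,F_N$ on $\bbr^{n-1}$ with
$$h(x',x_n)=\sum_{k=0}^N F_k(x')\, x_n^k\quad \text{for a.e. }(x',x_n)\in\rn.$$

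Finally I would close with a Fubini plus Chebyshev step. Since $p,q<\infty$, Fubini's theorem yields that for almost every $x'\in \bbr^{n-1}$ one has $f(x',\cdot)\in L^p(\bbr)$ and $g(x',\cdot)\in L^q(\bbr)$, hence $h(x',\cdot)\in L^p(\bbr)+L^q(\bbr)$. For such $x'$ the function $h(x',\cdot)$ is simultaneously a polynomial in $x_n$ of degree $\le N$ and an element of $L^p(\bbr)+L^q(\bbr)$. The Chebyshev inequality gives $|\{x_n:|u(x_n)|>c\}|\to 0$ as $c\to\infty$ for any $u\in L^p(\bbr)+L^q(\bbr)$, whereas for a nonzero polynomial this superlevel set has infinite measure for every sufficiently small $c>0$. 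Therefore $h(x',\cdot)\equiv 0$ for a.e.\ $x'$, and $h=0$ a.e.\ on $\rn$.

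The main technical obstacle is the passage from the distributional identity ``$\hat h$ has support in $\{x_n=0\}$'' to a concrete pointwise representation of $h$ as a polynomial in $x_n$ with locally integrable coefficients; this step crucially uses that $h$ is a genuine $L^1_{loc}$ function, not merely a distribution, and requires testing against products $\vp(x')\chi(x_n)$ for appropriate $\chi\in C_c^\infty(\bbr)$ to extract the coefficients $F_k$. The spectral and Fubini steps are then routine.
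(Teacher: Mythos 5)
Your argument is correct. The paper itself supplies no proof of Proposition \ref{iqah} --- it only points to \cite[Lemma 3.11]{Ru96} and \cite{Yos} --- so there is no internal argument to compare against; what you have written is a self-contained version of the standard mechanism behind those citations, transplanted from the point-singularity (Lizorkin-space) setting to the hyperplane $\{x_n=0\}$: the difference $h=f-g$ annihilates $\Phi$, hence $\hat h$ annihilates $\Psi$ and is supported in $\{x_n=0\}$; the layer expansion forces $h$ to be a polynomial in $x_n$; and no nonzero polynomial can lie in $L^p+L^q$ with $p,q<\infty$. The two steps that genuinely need care are exactly the ones you flagged: the global finiteness of the transversal order $N$, which is available because $\hat h$ is \emph{tempered} (hence of finite order), not merely a distribution supported on a hyperplane; and the upgrade from ``polynomial in $x_n$ with distributional coefficients'' to ``polynomial with $L^1_{loc}$ coefficients equal to $h$ almost everywhere,'' which uses that $h$ is itself locally integrable and is correctly handled by your moment-functional extraction of the $F_k$ (test against $\vp(x')\chi_j(x_n)$ with $\int\chi_j(t)\,t^k\,dt=\delta_{jk}$). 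The concluding Fubini--Chebyshev step is the same device used in the point case in \cite{Ru96}. I see no gap.
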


Recall that  a function $\mu$ is called a multiplier on a
linear topological space $X$ if the map \[ X \ni f \to \mu f \in X\] is
continuous in the topology of $X$.
  One can readily see that multiplication  by $q_{\a \pm}$ is an automorphism of  $ \Psi (\bbr^n)$.   Note also that the space $\Phi (\bbr^n)$ is invariant under translations, that is,
 for every $h \in \bbr^n$, the map \[ \Phi  (\bbr^n) \ni \vp (x)
\to \vp (x-h)\in \Phi (\bbr^n)\] is continuous in the induced topology of $\Phi (\bbr^n)$.
Indeed, since the Fourier transform maps $\Phi (\bbr^n)$ onto $\Psi (\bbr^n)$
isomorphically, this statement follows from the observation that the
function  $\xi \to
e^{ih\cdot \xi}$ is a multiplier on $ \Psi (\bbr^n)$.

By Theorem 1 from \cite[Chapter III, Section 3.7, p. 148] {GS2}, the inverse Fourier transform $\check q_{\a \pm} (x)\in \Phi'  (\bbr^n)$ is a convolutor on $\Phi (\bbr^n)$, that is the map $ \vp \to \check q_{\a \pm} \ast \vp$ is continuous in $\Phi (\bbr^n)$, and
\be\label {mrza}
(\check q_{\a \pm} \ast \vp)^{\wedge} =q_{\a \pm} \hat \vp.\ee

Note also that the  Riemann-Liouville operators   (\ref{poqa}), acting in the last variable and corresponding to the Fourier multipliers $(\mp ix_n)^{-\a}$, are automorphism of  $ \Phi (\bbr^n)$.

  \begin{lemma} \label {omor} Let $\psi\in \Psi (\bbr^n)$, $\a \in \bbc$. Then
 \be\label {zaza} \lang \hat p_{\a \pm}, \psi \rang= \lang q_{\a \pm}, \psi \rang, \ee
 which means that   $\hat p_{\a \pm}=q_{\a \pm}$ in the $\Psi'$-sense. In particular, if $\a=0$, for  the  delta distribution
(\ref{Dirac}) we have
\be\label {zyqz} \lang \hat\del_P, \psi \rang= \lang q_0, \psi \rang,\ee
where
\be\label {zyqz1} q_0 (x)= q_{0 \pm}(x)= (-ix_n)^{-(n-1)/2} \pi^{(n-1)/2} \exp \left(\!-\frac{i |x'|^2}{4x_n}\right ).\ee
\end{lemma}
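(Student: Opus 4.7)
The plan is to prove the identity for $\text{Re}\,\a>0$ by a direct computation with Gaussian regularization and then extend to all $\a\in\bbc$ by analytic continuation; the $\a=0$ case (\ref{zyqz}) then specializes. For $\text{Re}\,\a>0$, $p_{\,\a\pm}$ is locally integrable with polynomial growth, so for $\psi\in\Psi(\bbr^n)$,
\[
\lang\hat p_{\,\a+},\psi\rang=\lang p_{\,\a+},\hat\psi\rang=\intl_{\bbr^{n-1}}dy'\intl_0^\infty\frac{t^{\a-1}}{\Gam(\a)}\hat\psi(y',|y'|^2+t)\,dt
\]
after the substitution $t=y_n-|y'|^2$. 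The idea is to expand $\hat\psi(y',|y'|^2+t)=\intl_{\bbr^n}\psi(x)\,e^{i(x'\cdot y'+x_n(|y'|^2+t))}\,dx$ and insert Gaussian regularizers $e^{-\e|y'|^2}$ and $e^{-\del t}$ with $\e,\del>0$, so that Fubini applies and yields
\[
\lang\hat p_{\,\a+},\psi\rang=\lim_{\e,\del\to 0^+}\intl_{\bbr^n}\psi(x)\,(\del-ix_n)^{-\a}\,J_\e(x)\,dx,
\]
where $J_\e(x)=\pi^{(n-1)/2}(\e-ix_n)^{-(n-1)/2}\exp(-|x'|^2/(4(\e-ix_n)))$ comes from completing the square in the complex Gaussian in $y'$.

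Passing to the limit relies on the pointwise convergences $(\del-ix_n)^{-\a}\to(-ix_n)^{-\a}$ and $J_\e(x)\to q_0(x)$ in the branches fixed by (\ref{mza}), together with the uniform bounds $|(\del-ix_n)^{-\a}|\le C|x_n|^{-\text{Re}\,\a}$ (with $C$ depending only on $\text{Im}\,\a$) and $|J_\e(x)|\le\pi^{(n-1)/2}|x_n|^{-(n-1)/2}$. The crucial point is that $\psi\in\Psi$ vanishes to infinite order on the hyperplane $\{x_n=0\}$, so $\psi(x)/|x_n|^{\text{Re}\,\a+(n-1)/2}$ is Schwartz and in particular integrable; dominated convergence then gives $\lang\hat p_{\,\a+},\psi\rang=\intl_{\bbr^n}\psi(x)(-ix_n)^{-\a}q_0(x)\,dx=\lang q_{\,\a+},\psi\rang$ by (\ref{bdqa}). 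The $-$ case is analogous with $t=|y'|^2-y_n$.

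To extend to all $\a\in\bbc$, both $\a\mapsto\lang\hat p_{\,\a\pm},\psi\rang=\lang p_{\,\a\pm},\hat\psi\rang$ and $\a\mapsto\lang q_{\,\a\pm},\psi\rang$ are entire functions of $\a$: the first because $p_{\,\a\pm}$ is an entire $S'$-valued function of $\a$ by Lemma \ref{ngr}, and the second because $q_{\,\a\pm}(x)=(\mp ix_n)^{-\a}q_0(x)$ depends analytically on $\a$ pointwise and $\int q_{\,\a\pm}(x)\psi(x)\,dx$ converges absolutely and uniformly on compact $\a$-sets, again owing to the vanishing of $\psi$ on $\{x_n=0\}$. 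Equality on the half-plane $\text{Re}\,\a>0$ therefore extends to all of $\bbc$, and $\a=0$ specializes to (\ref{zyqz}).

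The main obstacle is the rigorous handling of the double conditionally convergent oscillatory integral arising from combining the Fresnel-type $y'$-integral with the Riemann--Liouville $t$-integral. The simultaneous Gaussian regularization in both variables is the natural device, but what makes the final passage to the limit valid inside $\int\psi\,dx$ is precisely the defining property of $\Psi(\bbr^n)$---vanishing to infinite order on the singular hyperplane $\{x_n=0\}$---which is exactly why this smaller test-function space was introduced just before the lemma.
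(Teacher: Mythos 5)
Your proposal is correct and follows essentially the same route as the paper: regularize $p_{\a\pm}$ by exponential/Gaussian damping so the kernel lies in $L^1$, compute its Fourier transform explicitly by completing the square in the Fresnel integral over $y'$ and evaluating the Riemann--Liouville integral in the transverse variable, pass to the limit by dominated convergence using the infinite-order vanishing of $\psi$ on $\{x_n=0\}$, and finish by analytic continuation in $\a$. The only cosmetic difference is that you use two independent regularization parameters $\e,\del$ where the paper uses a single $\e$ (its choices $e^{-\e y_n}$ and $e^{\e y_n-2\e|y'|^2}$ are exactly your scheme with $\del=\e$), which changes nothing of substance.
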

\begin{proof} Let us prove (\ref{zaza}) for $p_{\a +}$.
Denote
\be\label {laq}
e_{\e} (y)=  e^{-\e y_n}, \qquad \e >0,\ee
 and suppose first that $\a$ is real-valued. Then, for $\a >0$,
 \be\label {hyt}
\lang \hat p_{\a +}, \psi\rang\stackrel{\rm def }{=} \lang p_{\a +}, \hat\psi \rang= \lim\limits_{\e \to 0} \,\lang p_{\a +} e_\e, \hat\psi \rang=  \lim\limits_{\e \to 0} \,\lang (p_{\a +} e_\e)^\wedge, \psi \rang.\ee
Here the integral $\lang p_{\a +}, \hat\psi \rang$ is absolutely convergent (cf. the proof of Lemma \ref{ngr}), so that application of the Lebesgue dominated convergence theorem is well justified. Note also that
 $p_{\a +} e_\e \in L^1 (\rn)$ because
\bea\intl_{\rn} p_{\a +} (y)\, e^{-\e y_n} dy_n&=&\intl_{\bbr^{n-1}} dy' \intl_{|y'|^2}^\infty (y_n -|y'|^2)^{\a -1} e^{-\e y_n} dy_n\nonumber\\
&=& \intl_{\bbr^{n-1}} e^{-\e |y'|^2 } dy'  \intl_0^\infty s^{\a -1} e^{-\e s}  ds <\infty.\nonumber\eea
 Furthermore, as above,
\bea(p_{\a +} e_\e)^\wedge (x)   &=&\frac{1}{\Gam (\a)}\intl_{\bbr^{n-1}} e^{i x'\cdot y'}  dy'\intl_{|y'|^2}^\infty  e^{i x_n  y_n} (y_n -|y'|^2)^{\a -1} e^{-\e y_n} dy_n\nonumber\\
 &=&\frac{1}{\Gam (\a)}\intl_{\bbr^{n-1}} e^{i x'\cdot y'} e^{- |y'|^2 (\e - ix_n)} dy' \intl_0^\infty  s^{\a -1} e^{-s(\e - ix_n)} ds.\nonumber\eea
 Both integrals can be explicitly evaluated, and we get
 \be\label {hytq}
(p_{\a +} e_\e)^\wedge (x)\!=\! \pi^{(n-1)/2} (\e\! -\!ix_n)^{-\a- (n-1)/2}\, \exp\left (\!-\frac{|x'|^2}{4(\e \!-\!ix_n)}\right ).\ee
Here the power function is defined by
\be\label{power}
z^\lam =e^{\lam (\log |z| +i \arg  z)}, \qquad  -\pi/2  <\arg z <\pi/2,\ee
\[z= \e - ix_n, \qquad \lam = -\a- (n-1)/2.\]
To justify the passage to the limit  in the last expression in (\ref{hyt}), we take into account that $\psi (x)$ has a strong decay as $x_n  \to 0$ and
\bea
 |(p_{\a +} e_\e)^\wedge (x)|&=& \frac{\pi^{(n-1)/2}}{(\sqrt{\e^2+ x_n^2})^{\a+(n-1)/2}}\, \exp\left (\!-\frac{\e|x'|^2}{4(\e^2+x_n^2)}\right )\nonumber\\
 &\le& \frac{\pi^{(n-1)/2}}{|x_n|^{\a+(n-1)/2}}.\nonumber\eea
 Thus for $\a >0$,  owing to (\ref {mza}), we obtain
\bea \label {hytq1} &&\lim\limits_{\e \to 0} \,\lang (p_{\a +} e_\e)^\wedge, \psi \rang= \lang \lim\limits_{\e \to 0} \,  (p_{\a +} e_\e)^\wedge, \psi \rang\\
&&=\pi^{(n-1)/2} \intl_{\rn } (-ix_n)^{-\a-(n-1)/2} \exp \left(-\frac{i |x'|^2}{4x_n}\right )  \psi (x)\, dx= \lang q_{\a +}, \psi \rang.\nonumber\eea
 Reverting  to (\ref{hyt}), for $\a>0$ we obtain
$\lang \hat p_{\a +}, \psi \rang= \lang q_{\a +}, \psi \rang$.
Noting that $\lang \hat p_{\a +}, \psi \rang= \lang p_{\a +}, \hat\psi \rang$ is an entire function of $\a$ and the same is true for $\lang q_{\a +}, \psi \rang$, we make use of the analytic continuation and conclude that $\lang \hat p_{\a +}, \psi \rang= \lang q_{\a +}, \psi \rang$  for all $\a \in \bbc$.

 To compute the Fourier transform of $ p_{\a -}$, unlike (\ref{laq}), we set
\[
\eta_{\e} (y)=  e^{\e y_n-2\e|y'|^2}, \qquad \e >0.     \]
Note that $p_{\a -} \eta_\e \in L^1 (\rn)$. Indeed,
\bea\intl_{\rn} p_{\a -} (y)\, \eta_{\e} (y) dy_n&=&\intl_{\bbr^{n-1}}  e^{-2\e|y'|^2} dy' \intl^{|y'|^2}_{-\infty} (|y'|^2 -y_n)^{\a -1} e^{\e y_n} dy_n\nonumber\\
&=& \intl_{\bbr^{n-1}} e^{-\e |y'|^2 } dy'  \intl_0^\infty s^{\a -1} e^{-\e s}  ds <\infty.\nonumber\eea
Hence, as in (\ref{hyt}), for $\a >0$ we have
$\lang \hat p_{\a -}, \psi \rang= \lim\limits_{\e \to 0} \,\lang (p_{\a -} \eta_\e)^\wedge, \psi \rang$,
where
\bea(p_{\a -} \eta_\e)^\wedge (x)  &=&\frac{1}{\Gam (\a)}\intl_{\bbr^{n-1}}  e^{i x\cdot y -2\e |y'|^2}  dy'\intl^{|y'|^2}_{-\infty}  e^{i x_n  y_n +\e y_n} (|y'|^2 -y_n)^{\a -1} dy_n\nonumber\\
 &=&\frac{1}{\Gam (\a)}\intl_{\bbr^{n-1}} e^{i x'\cdot y'} e^{- |y'|^2 (\e - ix_n)} dy' \intl_0^\infty  s^{\a -1} e^{-s(\e + ix_n)} ds \nonumber\\
 &=&  \pi^{(n-1)/2} (\e \!+ \! ix_n)^{-\a} \,(\e \!- \!ix_n)^{(1-n)/2} \exp \left( \!-\frac{|x'|^2}{4(\e  \!- \! ix_n)}\right ).\nonumber\eea
Here, as above,
\[ (\e \!+ \!ix_n)^{-\a}\!= \!|\e\! + \!ix_n|^{-\a} e^{-i\a\, \arg(\e +ix_n)}, \quad  -\pi/2  <\arg(\e + ix_n) <\pi/2;\]

\[(\e-ix_n)^{(1-n)/2}=|\e-ix_n|^{(1-n)/2} e^{i(1-n)/2\, \arg(\e -ix_n)}, \]

\[-\pi/2  <\arg(\e - ix_n) <\pi/2 .\]
Assuming $\e \to 0$, owing to (\ref {mza}), we obtain
$\lim\limits_{\e \to 0} \, (p_{\a -} \eta_{\e})^\wedge (x)= q_{\a -} (x)$, where
\[
q_{\a -} (x)= \pi^{(n-1)/2} (ix_n)^{-\a}   (-ix_n)^{(1-n)/2}\exp \left(-\frac{i|x'|^2}{4x_n}\right )\]
or, by (\ref{mza}),
\be\label {bdw1}
q_{\a -} (x)= (ix_n)^{-\a-(n-1)/2} \om_{-}(x), \ee
\bea \om_- (x)&=& \pi^{(n-1)/2} \exp \left(\frac{(n-1) \pi i}{2}\, \sgn \,x_n  -\frac{i |x'|^2}{4x_n}  \right )\nonumber\\
&=&\om_+ (x) \exp \left(\!\frac{(n\!-\!1) \pi i}{2}\, \sgn \,x_n\!\right ).\nonumber\eea
 Thus, as in the previous case, we obtain $ (\hat p_{\a -}, \psi)= (q_{\a -}, \psi)$,
as desired.
\end{proof}

The following statement follows from (\ref{mrza}) and Lemma \ref{omor}.
  \begin{lemma} \label {omor2}   The convolution operators $P_{\pm}^{\,\a}$ initially defined by (\ref{poaxz}) for $Re \, \a >0$, extend to all $\a\in \bbc$ as automorphisms of the space $\Phi (\bbr^n)$ by the formula
\be\label  {pbvz}
(P_{\pm}^{\,\a} \vp)(x) = ( q_{\a \pm} \hat\vp)^{\vee}(x), \qquad \vp \in \Phi (\bbr^n).\ee
\end{lemma}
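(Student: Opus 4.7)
The plan is to derive Lemma \ref{omor2} as an almost immediate consequence of the identity $\hat p_{\a\pm}=q_{\a\pm}$ in $\Psi'(\bbr^n)$ established in Lemma \ref{omor}, combined with the convolutor property (\ref{mrza}). The argument splits into two parts: (a) verifying that the assignment $\vp\mapsto(q_{\a\pm}\hat\vp)^\vee$ is a continuous automorphism of $\Phi(\bbr^n)$ for every $\a\in\bbc$; and (b) checking that this prescription agrees with the integral definition (\ref{poaxz}) whenever $Re\,\a>0$.

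For (a), I would exploit that the Fourier transform is, by the very definition of $\Phi$, an isomorphism $\Phi(\bbr^n)\to\Psi(\bbr^n)$. The passage preceding the lemma records that multiplication by $q_{\a\pm}$ is an automorphism of $\Psi(\bbr^n)$ — this is where the vanishing of $\Psi$-functions together with all derivatives on $\{x_n=0\}$ compensates for the singular factor $(\mp ix_n)^{-\a-(n-1)/2}$ and the oscillatory factor $\om_\pm(x)$. Composing Fourier, multiplication by $q_{\a\pm}$, and inverse Fourier yields a continuous map $\Phi\to\Phi$. Bijectivity follows from the explicit inverse $\psi\mapsto(\hat\psi/q_{\a\pm})^\vee$: multiplication by $1/q_{\a\pm}$ is likewise an automorphism of $\Psi$ by the same reasoning.

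For (b), fix $\a$ with $Re\,\a>0$ and $\vp\in\Phi(\bbr^n)$. The integral $(P_{\pm}^{\,\a}\vp)(x)=(p_{\a\pm}\ast\vp)(x)$ converges absolutely and defines a $C^\infty$-function of $x$. By (\ref{mrza}), the function $(q_{\a\pm}\hat\vp)^\vee$ equals $\check q_{\a\pm}\ast\vp$ in $\Phi'(\bbr^n)$. It remains to identify $p_{\a\pm}$ with $\check q_{\a\pm}$ as $\Phi'$-distributions, which is precisely what Lemma \ref{omor} provides: since $\hat p_{\a\pm}=q_{\a\pm}$ in $\Psi'$, taking inverse Fourier transform — an isomorphism $\Psi'\to\Phi'$ — gives $p_{\a\pm}=\check q_{\a\pm}$ in $\Phi'$, so the two convolutions coincide as $\Phi'$-distributions, and hence pointwise by Proposition \ref{iqah}.

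The main delicate point is the bookkeeping of the relevant dual spaces at each step: neither $q_{\a\pm}\hat\vp$ nor $p_{\a\pm}\ast\vp$ lies a priori in $S(\rn)$, so the Fourier transform must be interpreted in the $\Psi'$-sense on one side and in the $\Phi'$-sense on the other, and one must check that these interpretations are compatible with the $\e$-regularisation already used inside the proof of Lemma \ref{omor}. Once this is phrased consistently, the identification for $Re\,\a>0$ is automatic, and the formula $(q_{\a\pm}\hat\vp)^\vee$ then furnishes the desired analytic continuation to all $\a\in\bbc$, exhibiting $\{P_{\pm}^{\,\a}\}_{\a\in\bbc}$ as a holomorphic family of automorphisms of $\Phi(\bbr^n)$.
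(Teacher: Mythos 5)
Your argument is correct and follows exactly the route the paper intends: the paper gives no separate proof of Lemma \ref{omor2}, stating only that it ``follows from (\ref{mrza}) and Lemma \ref{omor}'', and your two steps --- the automorphism property of multiplication by $q_{\a \pm}$ on $\Psi(\bbr^n)$ transported to $\Phi(\bbr^n)$ via the Fourier isomorphism, and the identification $p_{\a\pm}=\check q_{\a\pm}$ in the $\Phi'$-sense for $Re\,\a>0$ --- are precisely the content of those two ingredients. Your elaboration of the bookkeeping between $\Psi'$ and $\Phi'$ is a faithful (and somewhat more careful) expansion of what the paper leaves implicit.
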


\begin{remark} The formula (\ref {zyqz1}) means that the parabolic Radon transform (\ref{ppar})  is  a composition of the   Riemann-Liouville integral
operator $I_+^{(n-1)/2}$ in the $x_n$-variable and a certain convolution operator  which corresponds to the Fourier multiplier
$\pi^{(n-1)/2}\exp \left(-i |x'|^2/4x_n\right )$ and  is bounded in $L^2(\rn)$; cf. Lemma \ref {ing}  below.
\end{remark}

To compute the  Fourier transforms of the distributions $p^*_{\a \pm}$  corresponding to  the dual fractional integrals , we set
\be\label {bdqap}
 q^*_{\a \pm} (x)\equiv q^*_{\a \pm} (x', x_n) =  q_{\a \pm} (x', -x_n),\ee
or,  by (\ref{bdqa}),
\be\label {bdqaps}
q^*_{\a \pm} (x)=  (\pm ix_n)^{-\a-(n-1)/2}  \om^*_{\pm} (x),\ee
\[
\om^*_+ (x)\!=\!\pi^{(n-1)/2} \exp \left(\frac{i |x'|^2}{4x_n}\right ),\quad \!\om^*_- (x)\!=\!\om^*_+ (x) \exp \left(\!-\frac{(n\!-\!1) \pi i}{2}\, \sgn \,x_n\!\right ).\]

The next statement mimics Lemmas \ref {omor} and \ref {omor2}.

 \begin{lemma} \label{yti1} \hfill

 \noindent {\rm (i)}  For any $\psi\in \Psi (\bbr^n)$ and $\a \in \bbc$,
 \be\label {zazaq } \lang (p^*_{\a \pm})^\wedge, \psi \rang= \lang q^*_{\a \pm}, \psi \rang,\ee
 which means that   $(p^*_{\a \pm})^\wedge=q^*_{\a \pm}$ in the $\Psi'$-sense.
  In particular, if $\a=0$, then for the  delta distribution
(\ref{kss1}) we have
\be\label {zyqzs} \lang (\del^*)^\wedge_P, \vp \rang = \lang q^*_0, \vp \rang,\ee
where
\be\label {zyqz1s} q^*_0 (x)= (ix_n)^{-(n-1)/2} \pi^{(n-1)/2} \exp \left(\frac{i |x'|^2}{4x_n}\right ).\ee

 \noindent {\rm (ii)}  The convolution operators $\stackrel{*}{P}{\!}^\a_{\pm}$ initially defined by (\ref{poax}) for $Re \, \a >0$, extend to all $\a\in \bbc$ as automorphisms of the space $\Phi (\bbr^n)$ by the formula
\be\label  {pbvz}
(\stackrel{*}{P}{\!}^\a_{\pm} \vp)(x) = ( q^*_{\a \pm} \hat\vp)^{\vee}(x), \qquad \vp \in \Phi (\bbr^n).\ee
\end{lemma}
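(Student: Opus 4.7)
My plan is to reduce both parts of the lemma to the already-established Lemmas \ref{omor} and \ref{omor2} via the reflection identity $p^*_{\a \pm}(y) = p_{\a \mp}(-y)$ from (\ref{ped1}). Writing $J$ for the reflection operator $(Jf)(x) = f(-x)$, this becomes the $\Psi'$-equality $p^*_{\a \pm} = J p_{\a \mp}$. Since $J$ preserves the hyperplane $x_n = 0$, it is an automorphism of both $\Psi(\bbr^n)$ and $\Phi(\bbr^n)$, and it intertwines the Fourier transform via the general identity $(Jf)^\wedge = J \hat f$.

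For part (i), I would test against an arbitrary $\psi \in \Psi(\bbr^n)$ and invoke Lemma \ref{omor} at the key step to obtain the chain
\begin{align*}
\lang (p^*_{\a \pm})^\wedge, \psi \rang
 &= \lang p^*_{\a \pm}, \hat\psi \rang
  = \lang p_{\a \mp}, J\hat\psi \rang
  = \lang p_{\a \mp}, \widehat{J\psi} \rang \\
 &= \lang \widehat{p_{\a \mp}}, J\psi \rang
  = \lang q_{\a \mp}, J\psi \rang.
\end{align*}
Since $q_{\a \mp}(x)$ depends on $x'$ only through $|x'|^2$, the last bracket equals $\int q_{\a \mp}(x', -x_n)\,\psi(x)\,dx$. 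I would then match this integrand with $q^*_{\a\pm}(x)$ via (\ref{bdqap}), and unpack through (\ref{bdqa}) together with the identity $\om_\pm(x', -x_n) = \om^*_\pm(x)$, arriving at the normalized form (\ref{bdqaps}). The special case $\a = 0$ giving (\ref{zyqzs}) is then the analytic-continuation limit, exactly as in the proof of Lemma \ref{omor}.

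For part (ii), the cleanest route uses the factorization $\stackrel{*}{P}{\!}^\a_{\pm} = J P^{\,\a}_{\mp} J$ from (\ref{pzax}). By Lemma \ref{omor2}, $P^{\,\a}_{\mp}$ is already an automorphism of $\Phi(\bbr^n)$ with Fourier multiplier $q_{\a\mp}$; since $J$ is also an automorphism of $\Phi(\bbr^n)$, so is the composition, and its multiplier is $J q_{\a\mp} = q^*_{\a\pm}$ by part (i). One could alternatively invoke the Gelfand--Shilov convolutor theorem on $q^*_{\a\pm}$ directly, in exact analogy with the proof of Lemma \ref{omor2}. The main obstacle, in my view, is the careful bookkeeping of the branches $(\pm ix_n)^\lam$ from (\ref{mza}) under the substitution $x_n \mapsto -x_n$: these branches are not invariant, picking up an extra factor of $\exp(\lam\pi i\,\sgn x_n)$, and it is precisely the phase asymmetry built into $\om_{\pm}$ versus $\om^*_{\pm}$ that absorbs this factor so that $J q_{\a\mp}$ rewrites into the normalized form (\ref{bdqaps}). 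Everything else in the argument is routine.
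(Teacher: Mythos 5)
Your strategy --- reducing everything to Lemmas \ref{omor} and \ref{omor2} through the reflection $J$, using $p^*_{\a\pm}=Jp_{\a\mp}$ from (\ref{ped1}), the intertwining $(Jf)^\wedge=J\hat f$, and the fact that $J$ is an automorphism of both $\Psi(\bbr^n)$ and $\Phi(\bbr^n)$ --- is sound and is essentially what the paper intends, since the paper offers no separate proof beyond the remark that the statement ``mimics'' those two lemmas. Your chain of equalities in part (i) is correct up to and including
$\lang (p^*_{\a \pm})^\wedge, \psi \rang=\lang q_{\a \mp}, J\psi \rang=\int_{\rn} q_{\a \mp}(x',-x_n)\,\psi(x)\,dx$.

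The gap is the final identification, ``match this integrand with $q^*_{\a\pm}(x)$ via (\ref{bdqap})''. Formula (\ref{bdqap}) reads $q^*_{\a\pm}(x)=q_{\a\pm}(x',-x_n)$, so the function your chain actually produces, $q_{\a\mp}(x',-x_n)$, is $q^*_{\a\mp}(x)$, not $q^*_{\a\pm}(x)$; and these two genuinely differ, by the factor $e^{-\a\pi i\,\sgn\, x_n}$ coming from $(ix_n)^{-\a}$ versus $(-ix_n)^{-\a}$ in (\ref{mza}), which is not absorbed by the phase asymmetry of $\om_\pm$ versus $\om^*_\pm$. Carried through honestly, your argument proves $(p^*_{\a\pm})^\wedge=q^*_{\a\mp}$ in the $\Psi'$-sense, which is also what a direct regularization computation in the style of Lemma \ref{omor} and the duality of Lemma \ref{imit} give. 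So either the lemma as stated, in combination with the definitions (\ref{ped1}) and (\ref{bdqap}), carries a $\pm$-labelling inconsistency, or your matching step is wrong --- in either case your write-up silently swaps a sign exactly at the point you describe as routine bookkeeping, and that swap must be confronted explicitly rather than asserted. The same issue propagates to part (ii): by (\ref{pzax}) the Fourier multiplier of $\stackrel{*}{P}{\!}^\a_{\pm}=JP^{\,\a}_{\mp}J$ is $Jq_{\a\mp}=q^*_{\a\mp}$, not $q^*_{\a\pm}$. Note that the slip is invisible in the special case $\a=0$, where $q^*_{0+}=q^*_{0-}=q^*_0$, so (\ref{zyqzs}) and (\ref{zyqz1s}) are unaffected; and the rest of your part (ii) (automorphism of $\Phi(\bbr^n)$ via the Gelfand--Shilov convolutor theorem, or via composition with $J$) is fine once the multiplier is labelled correctly.
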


 \begin{corollary} {\rm (cf. (\ref{poeqad}),   (\ref{poeqaq}))} If $\vp\in \Phi (\rn)$, then the equalities
\be\label{ptga}
 P_{\pm}^{\,\a} I_{\pm}^{\,\b} \vp =  I_{\pm}^{\,\b} P_{\pm}^{\,\a} f = P_{\pm}^{\,\a +\b}\vp, \quad \stackrel{*}{P}{\!}^\a_{\pm} I_{\mp}^{\,\b} \vp =  I_{\mp}^{\,\b} \stackrel{*}{P}{\!}^\a_{\pm} \vp = \stackrel{*}{P}{\!}^{\a +\b}_{\pm} \vp\ee
 hold for all $\a, \b \in \bbc$. In particular, for all $\a \in \bbc$,
\be\label{ptga}
 P_{\pm}^{\,\a} \vp=  I_{\pm}^{\,\a} P\vp, \qquad  \stackrel{*}{P}{\!}^\a_{\pm} \vp = I_{\mp}^{\,\a} P^* \vp,\ee
where $P$ and $P^*$ are the Radon transforms (\ref{ppar}),  (\ref{Dacs}).
\end{corollary}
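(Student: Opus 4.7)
The plan is to reduce both chains of identities to pointwise algebraic relations for the Fourier symbols, using the fact that all operators involved are automorphisms of $\Phi(\rn)$ with explicit multipliers. By Lemma \ref{omor2}, for $\vp \in \Phi(\rn)$ one has $(P_{\pm}^{\,\a}\vp)^\wedge = q_{\a \pm}\hat\vp$, and by Lemma \ref{yti1}(ii), $(\stackrel{*}{P}{\!}^\a_{\pm}\vp)^\wedge = q^*_{\a \pm}\hat\vp$. Moreover, as recalled in the text, $I_{\pm}^{\,\b}$ acting in the $x_n$-variable is an automorphism of $\Phi(\rn)$ corresponding to the Fourier multiplier $(\mp ix_n)^{-\b}$. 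Since the Fourier transform pairs $\Phi(\rn)$ and $\Psi(\rn)$ isomorphically, both sides of each claimed identity lie in $\Phi(\rn)$, and it suffices to verify equality of the corresponding symbols pointwise on $\Psi(\rn)$, where multiplication by the branches $(\mp ix_n)^{\lambda}$ is well-defined thanks to the vanishing of test functions and all their derivatives on $\{x_n = 0\}$.

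The first chain then reduces, via (\ref{bdqa}), to the pointwise identity
\[
q_{\a \pm}(x)\,(\mp ix_n)^{-\b} = (\mp ix_n)^{-\a-(n-1)/2}\om_\pm(x)\,(\mp ix_n)^{-\b} = (\mp ix_n)^{-(\a+\b)-(n-1)/2}\om_\pm(x) = q_{(\a+\b)\pm}(x),
\]
where the exponent-addition rule follows directly from the branch convention (\ref{mza}), and commutativity of pointwise multiplication places $(\mp ix_n)^{-\b}$ on either side of $q_{\a \pm}$. Inverting the Fourier transform yields the triple equality $P_{\pm}^{\,\a}I_{\pm}^{\,\b}\vp = I_{\pm}^{\,\b}P_{\pm}^{\,\a}\vp = P_{\pm}^{\,\a+\b}\vp$. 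The dual chain is handled in the same way using (\ref{bdqaps}): the symbol of $\stackrel{*}{P}{\!}^\a_{\pm}$ involves $(\pm ix_n)^{-\a-(n-1)/2}$ (note the sign flip relative to $q_{\a \pm}$), so it is the Riemann--Liouville operator $I_{\mp}^{\,\b}$ with symbol $(\pm ix_n)^{-\b}$ that combines additively with it, giving $q^*_{(\a+\b)\pm}$; the $\a$-independent factor $\om^*_\pm$ passes through unaltered.

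The two ``in particular'' identities then follow by specializing $\a = 0$ in the respective chain and relabeling $\b$ as $\a$: by Lemma \ref{ngr} we have $p_{0\pm} = \del_P$, so $P_{\pm}^{\,0}\vp = P\vp$ for $\vp \in \Phi(\rn)$, and analogously $\stackrel{*}{P}{\!}^0_{\pm}\vp = P^*\vp$ by (\ref{kss1})--(\ref{Diracd}). The only potential obstacle is bookkeeping of the complex branches, but this is immediate from (\ref{mza}): since $\lambda\log|x_n| \pm (\lambda\pi i/2)\sgn x_n$ is linear in $\lambda$, additivity of exponents $(\mp ix_n)^{\lambda_1}(\mp ix_n)^{\lambda_2} = (\mp ix_n)^{\lambda_1+\lambda_2}$ holds identically on $\bbr\setminus\{0\}$, and because $\om_\pm$ and $\om^*_\pm$ are independent of $\a$ and $\b$, no commutation issue arises. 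The argument is therefore essentially formal once the Fourier-multiplier framework of Lemmas \ref{omor}, \ref{omor2}, and \ref{yti1} is in place.
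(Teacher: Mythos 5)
Your proof is correct and follows exactly the route the paper intends: the corollary is stated without proof precisely because it reduces to the multiplier formulas of Lemmas \ref{omor2} and \ref{yti1}(ii) together with the stated fact that $I_{\pm}^{\,\b}$ acts on $\Phi(\rn)$ with symbol $(\mp ix_n)^{-\b}$, after which additivity of exponents under the branch convention (\ref{mza}) gives everything. Your handling of the sign flip in the dual case and the specialization $\a=0$ (via $p_{0\pm}=\del_P$) is exactly right.
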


 \begin{corollary}   For all $\a, \b \in \bbc$,
\be\label {wzqs}
  q^*_{\b \pm} (x) \,q_{\a \pm} (x)= \pi^{n-1} |x_n|^{-\a -\b -n +1} \exp \left(\pm\frac{ (\a -\b) \pi i}{2}\,\sgn \, x_n\right ).\ee
 In particular, for $\a=\b$,
\be\label {wzqs4}
  q^*_{\a \pm} (x) \,q_{\a \pm} (x)= \pi^{n-1} |x_n|^{-2\a  -n +1}\ee
and
\be\label {wzqs5}
  q^*_{0} (x) \,q_{0} (x)= \pi^{n-1} |x_n|^{1 -n}.\ee
\end{corollary}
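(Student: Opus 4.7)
The plan is to prove the identity by direct computation, using the explicit definitions (\ref{bdqa}), (\ref{bdqaps}) and the branch convention (\ref{mza}). The claim has two multiplicative parts that can be handled independently: the $\omega$-factors (which carry the oscillating Gaussian-like exponentials) and the complex power factors.

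First I would multiply the $\omega$-factors. Writing out $\omega_+^\ast(x)\omega_+(x)$, the exponentials $\exp(i|x'|^2/(4x_n))$ and $\exp(-i|x'|^2/(4x_n))$ cancel, leaving $\pi^{n-1}$. For the minus case, $\omega_-^\ast(x)\omega_-(x)$, one gets the same Gaussian cancellation plus the additional phase factors $\exp(\tfrac{(n-1)\pi i}{2}\sgn x_n)$ and $\exp(-\tfrac{(n-1)\pi i}{2}\sgn x_n)$, which also cancel. So in both cases $\omega_{\pm}^\ast(x)\omega_\pm(x) = \pi^{n-1}$.

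Next I would multiply the power factors using (\ref{mza}). We have
\begin{align*}
(\pm ix_n)^{-\beta-(n-1)/2} &= \exp\!\bigl(\bigl(-\beta-\tfrac{n-1}{2}\bigr)\log|x_n| \pm \bigl(-\beta-\tfrac{n-1}{2}\bigr)\tfrac{\pi i}{2}\sgn x_n\bigr),\\
(\mp ix_n)^{-\alpha-(n-1)/2} &= \exp\!\bigl(\bigl(-\alpha-\tfrac{n-1}{2}\bigr)\log|x_n| \mp \bigl(-\alpha-\tfrac{n-1}{2}\bigr)\tfrac{\pi i}{2}\sgn x_n\bigr).
\end{align*}
Multiplying, the real parts of the exponents combine to $(-\alpha-\beta-n+1)\log|x_n|$, producing the factor $|x_n|^{-\alpha-\beta-n+1}$. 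The imaginary parts, because of their opposite $\pm/\mp$ signs, subtract instead of adding, giving
$$\pm\bigl[\bigl(-\beta-\tfrac{n-1}{2}\bigr) - \bigl(-\alpha-\tfrac{n-1}{2}\bigr)\bigr]\tfrac{\pi i}{2}\sgn x_n = \pm\tfrac{(\alpha-\beta)\pi i}{2}\sgn x_n.$$
Combining with the $\omega$-computation gives exactly (\ref{wzqs}). The specializations (\ref{wzqs4}) and (\ref{wzqs5}) then follow by setting $\alpha=\beta$ (so that the exponential phase becomes $1$) and further $\alpha=0$.

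There is no real obstacle here — the only thing to be careful about is matching up the $\pm$ signs in the two definitions $q_{\alpha\pm}$ and $q^\ast_{\beta\pm}$ (which use opposite conventions $(\mp ix_n)$ versus $(\pm ix_n)$ in the power factor) so that the imaginary parts of the exponents in (\ref{mza}) subtract with the correct sign, and checking that the additional $\sgn x_n$ phase in $\omega_-$ is exactly cancelled by the opposite-sign phase in $\omega_-^\ast$.
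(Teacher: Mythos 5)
Your computation is correct and is exactly the direct verification from (\ref{mza}), (\ref{bdqa}), and (\ref{bdqaps}) that the paper intends (the corollary is stated without proof, as an immediate consequence of those definitions). The cancellation of the $\om$-phases and the subtraction of the imaginary parts of the exponents are handled correctly, so nothing is missing.
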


\begin{remark} The meaning of (\ref{wzqs4}) and  (\ref{wzqs5}) is that the composition of the corresponding operators is a constant multiple of the  Riesz potential in the $x_n$-variable. Specifically,
\[
\stackrel{*}{P}{\!}^\a_{\pm} P^\a_{\pm} \vp = P^\a_{\pm} \stackrel{*}{P}{\!}^\a_{\pm} \vp = \pi^{n-1} I_n^{2\a +n -1} \vp, \qquad P^* P \vp =PP^* =\pi^{n-1} I_n^{n -1}\vp,\]
where $\vp \in  \Phi (\bbr^n)$ and, by definition,
\[
(I_n^\lam \vp)^{\wedge}(x) = |x_n|^{-\lam} \hat \vp (x).\]
\end{remark}

\section{Convolutions with  $L^p$-functions}

In this section, we obtain necessary and sufficient conditions, under which  $P^\a_{\pm}$ extend as linear  bounded operators mapping
 $L^p (\rn)$ to $L^q(\rn)$; $1\le p, q\le \infty$.
 First, we investigate the cases $Re \,\a=(1-n)/2$ and  $Re \,\a=1$. Other cases will  be treated by interpolation.

\subsection{The $L^2$-$L^2$ Estimate}

\begin{lemma} \label {ing} For the operators $P^{(1-n)/2 +i\gam}_{\pm}$, $\gam \in \bbr$, initially defined by (\ref{w3a1}),
there exist unique linear bounded extensions
\be\label {llzza} \P^{(1-n)/2 +i\gam}_{\pm}: L^2 (\rn)\to L^2(\rn), \ee
such that for every $f\in L^2 (\rn)$,
\be\label{krccs}
||\P^{(1-n)/2 +i\gam}_{\pm} f||_2 \le \pi^{(n-1)/2} e^{\pi |\gam|/2}\, ||f||_2.\ee
Moreover, in the case $\gam=0$, the operators $\pi^{(1-n)/2}\P^{(1-n)/2}_{\pm}$ are unitary.
\end{lemma}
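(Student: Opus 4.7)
The plan is to obtain the bound from Plancherel's theorem applied to the Fourier multiplier representation of $P^\a_\pm$ on the dense subspace $\Phi(\rn)$, and then extend by density.

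First I would invoke Lemma \ref{omor2}, which gives for $\vp\in\Phi(\rn)$ the identity
\[(P^\a_\pm \vp)^{\wedge}(x) = q_{\a \pm}(x)\, \hat\vp(x),\]
and specialize to $\a=(1-n)/2+i\gam$. The key computation is to control $|q_{\a\pm}(x)|$ at this line. Since $-\a-(n-1)/2=-i\gam$ at this value of $\a$, the definition (\ref{bdqa}) gives
\[q_{\a\pm}(x)=(\mp ix_n)^{-i\gam}\,\om_\pm(x).\]
The factor $\om_+$ has modulus $\pi^{(n-1)/2}$ (the complex exponential $\exp(-i|x'|^2/4x_n)$ has modulus one), and $\om_-$ differs from $\om_+$ by another unimodular factor; hence $|\om_\pm(x)|=\pi^{(n-1)/2}$. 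Using the branch convention (\ref{mza}),
\[(\mp ix_n)^{-i\gam}=\exp\!\big(\!-i\gam\log|x_n| \pm \tfrac{\gam\pi}{2}\sgn x_n\big),\]
whose modulus is $\exp(\pm\gam\pi\,\sgn x_n /2)\le e^{\pi|\gam|/2}$. Consequently,
\[\|q_{\a\pm}\|_\infty \le \pi^{(n-1)/2}\, e^{\pi|\gam|/2}.\]

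Next I would combine this with Plancherel's theorem. For $\vp\in\Phi(\rn)$,
\[\|P^\a_\pm \vp\|_2 = (2\pi)^{-n/2}\|q_{\a\pm}\hat\vp\|_2 \le \pi^{(n-1)/2}e^{\pi|\gam|/2}\|\vp\|_2.\]
Since $\Phi(\rn)$ is dense in $L^2(\rn)$ by Proposition \ref{izsh}, the operator admits a unique bounded extension $\P^{(1-n)/2+i\gam}_\pm$ to $L^2(\rn)$ satisfying (\ref{krccs}); uniqueness of the extension (and compatibility with any other natural extension) is ensured by Proposition \ref{iqah}.

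Finally, for the unitarity claim at $\gam=0$, I would observe that the modulus of $q_{(1-n)/2\,\pm}(x)=\om_\pm(x)$ is \emph{exactly} $\pi^{(n-1)/2}$ everywhere, so $\pi^{(1-n)/2}\P^{(1-n)/2}_\pm$ corresponds to multiplication on the Fourier side by the unimodular function $\pi^{(1-n)/2}\om_\pm$. By Plancherel this operator is an isometry; surjectivity follows because multiplication by the unimodular reciprocal $\pi^{(1-n)/2}\overline{\om_\pm}$ inverts it, and that inverse multiplier is likewise a bounded Fourier multiplier on $L^2$. The main delicacy throughout is the branch bookkeeping leading to the factor $e^{\pi|\gam|/2}$ from the imaginary order $-i\gam$; once that is done cleanly, density and Plancherel finish the argument.
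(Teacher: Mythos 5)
Your multiplier computation is correct and coincides with the paper's: at $\a=(1-n)/2+i\gam$ one has $|q_{\a\pm}(x)|=\pi^{(n-1)/2}e^{\mp\pi\gam\,\sgn (x_n)/2}\le \pi^{(n-1)/2}e^{\pi|\gam|/2}$, and the unitarity argument for $\gam=0$ (exact unimodularity of $\pi^{(1-n)/2}\om_\pm$ plus boundedness of the reciprocal multiplier) is also the paper's. However, there is a genuine gap in the existence/extension step. You establish the bound only for $\vp\in\Phi(\rn)$ via Lemma \ref{omor2}, and then extend by density. This produces \emph{a} bounded operator on $L^2(\rn)$, but the lemma asserts an extension of $P^{(1-n)/2+i\gam}_{\pm}$ as initially defined by (\ref{w3a1}) on $S(\rn)$, and you have not shown that your operator agrees with (\ref{w3a1}) on $S(\rn)\setminus\Phi(\rn)$. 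This is not a cosmetic point: $\Phi(\rn)$ consists of functions whose Fourier transforms vanish to infinite order on the hyperplane $x_n=0$, so by Paley--Wiener no nonzero $C_c^\infty$ function belongs to $\Phi(\rn)$; yet the interpolation argument for Theorem \ref{lanseT} applies the $L^2$--$L^2$ bound precisely to $\vp\in C_c^\infty(\rn)$. A bound valid only on $\Phi(\rn)$ therefore cannot be fed into Step 1 of that proof.

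This missing step is exactly where the paper spends most of its effort: it verifies directly that for every $\vp\in S(\rn)$ one has $(P^{(1-n)/2+i\gam}_{\pm}\vp)(x)=(q_{(1-n)/2+i\gam\,\pm}\,\hat\vp)^{\vee}(x)$, by using (\ref{Diracs1}) to shift the order to $\a=1/2+i\gam$ (or $1+i\gam$ for odd $n$) where $p_{\a\pm}$ is locally integrable, inserting the damping factor $e_\e$ (resp. $\eta_\e$), applying Plancherel to the absolutely convergent convolution, and passing to the limit $\e\to0$. Your appeal to Proposition \ref{iqah} to ensure ``compatibility with any other natural extension'' does not close the gap as stated: to invoke it you would first need to know that $P^{\,\a}_{\pm}\vp$ for $\vp\in S(\rn)$ lies in some $L^q(\rn)$ and coincides with your density-defined operator as a $\Phi'$-distribution, and the latter requires an argument (e.g.\ a duality identity $\lang P^{\,\a}_{\pm}\vp,\phi\rang=\lang\vp,\stackrel{*}{P}{\!}^{\a}_{\mp}\phi\rang$ valid for complex $\a$ with $Re\,\a\le 0$ and $\phi\in\Phi(\rn)$, which the paper only proves for $Re\,\a>0$). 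Either supply such a duality argument or reproduce the paper's direct computation; without one of these, the chain from the $\Psi'$-identity $\hat p_{\a\pm}=q_{\a\pm}$ to the claimed extension of (\ref{w3a1}) is broken.
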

\begin{proof}  Let us first consider the ``$+$" case and suppress ``$+$" for ease of presentation. By (\ref{mza}) and (\ref{bdqa}),
\[ q_{(1-n)/2 +i\gam} (x)\!=\! \om (x) (-ix_n)^{-i\gam}= \om (x)\exp ( -i\gam \log |x_n| - \frac{\pi \gam}{2}\, \sgn\, x_n),\]
and therefore
\[|q_{(1-n)/2 +i\gam} (x)|=\pi^{(n-1)/2}  e^{-\pi (|\gam|/2)\,{\rm sgn}\, x_n } \le \pi^{(n-1)/2}  e^{\pi |\gam|/2}.\]
Hence there exists a unique linear operator (\ref{llzza})  defined by
\be\label {ent} (\P^{(1-n)/2 +i\gam} f)^\wedge (x)= q_{{(1-n)/2 +i\gam}} (x)\hat f (x), \qquad f \in L^2(\rn),\ee
and satisfying
\[||\P^{(1-n)/2 +i\gam}|| = ||q_{{(1-n)/2 +i\gam}}||_{\infty}\le \pi^{(n-1)/2}  e^{\pi |\gam|/2};\]
 see, e.g.,  Theorem 3.18 in \cite [Chapter I, Section 3] {SW}.

  Let us prove that  $\P^{(1-n)/2 +i\gam}$ is an  extension of  $P^{(1-n)/2 +i\gam}$, that is,
  \[\P^{(1-n)/2 +i\gam} \vp= P^{(1-n)/2 +i\gam}\vp, \qquad \vp \in S(\rn).\]
    Suppose $n$ is even and make use of
  (\ref{Diracs1}) with $\ell =n/2$, $\a=1/2 +i\gam$.
   As in the proof of Lemma \ref{omor} (cf. (\ref{hyt})),  we have
\bea (P^{(1-n)/2 +i\gam} \vp)(x)\! \!\!&=&\! \!\! \lang p_{(1-n)/2 +i\gam} (y), \vp (x \!-\! y) \rang= \lang \partial_n^{n/2} p_{1/2 +i\gam}(y), \vp (x \!- \!y) \rang\nonumber\\
\! \!\! &=& \! \!\!\lang p_{1/2 +i\gam}(y), (\partial_n^{n/2}\vp) (x\! - \!y)\rang\nonumber\\
\! \!\!&=&\! \!\! \lim\limits_{\e \to 0} \,\lang p_{1/2 +i\gam}(y) e_\e (y), (\partial_n^{n/2}\vp) (x \!- \!y)\rang.\nonumber\eea
By Plancherel's formula, setting
\[
g_x (y)= \overline{(\partial_n^{n/2}\vp) (x - y)},\]
we continue:
\bea (P^{(1-n)/2+i\gam} \vp)(x)&=& \lim\limits_{\e \to 0} \,\lang p_{1/2+i\gam}(y) e_\e (y),  \overline{g_x (y)} \,\rang\nonumber\\
&=& (2\pi)^{-n} \lim\limits_{\e \to 0} \,\lang (p_{1/2+i\gam} e_\e)^\wedge (\xi), \overline{\hat g_x (\xi)} \,\rang\nonumber\eea
where
\[
\overline{\hat g_x (\xi)}=\overline{\intl_{\rn} e^{iy\cdot \xi} \, \overline{(\partial_n^{n/2}\vp) (x - y)} \,dy}=  e^{-ix \cdot \xi} (-i \xi)^{n/2} \hat \vp (\xi).\]
Computing $(p_{1/2+i\gam} e_\e)^\wedge (\xi)$, as in (\ref{hytq}),  and passing to the limit under the sign of integration, as in (\ref{hytq1}), we obtain
\bea (P^{(1-n)/2+i\gam} \vp)(x)&=& (2\pi)^{-n}\intl_{\rn}  \hat \vp (\xi) \om (\xi) (-i\xi_n)^{-i\gam} e^{-ix \cdot \xi} d\xi\nonumber\\
&=& (q_{(1-n)/2 +i\gam} \hat \vp)^{\vee} (x).\nonumber\eea
The last expression agrees with (\ref{ent}).
The equality $P_{-}^{(1-n)/2 +i\gam} \vp=\P_{-}^{(1-n)/2 +i\gam} \vp$ can be proved similarly, following the corresponding reasoning in the proof of  Lemma \ref{omor}.
 If $n$ is odd, the proof follows the same lines, but with the choice $\ell =(n+1)/2$ and $\a=1 +i\gam$ while using
 (\ref{Diracs1}).

The operators $\pi^{(1-n)/2}\P^{(1-n)/2}_{\pm}$ are obviously isometries.
To show that  they are unitary, it remains to note that the reciprocals  $(\om_{\pm})^{-1}$  are bounded functions, and therefore they generate  linear bounded operators from  $L^2(\rn)$ to $L^2(\rn)$.
\end{proof}

\begin {remark} The case $\gam =0$ in Lemma \ref{ing}  agrees with  Proposition 2 in \cite {NR} and   Proposition 1 in \cite {BK}, where the reasoning differs from ours.
\end{remark}

\subsection{The $L^1$-$L^\infty$ Estimate}
In the case $\a=1 +i\gam$, $\gam \in \bbr$, we have
\[
(P^{1 +i\gam}_{\pm} \vp)(x)  \frac{1}{\Gam (1 +i\gam )} \intl_{\bbr^n}(y_n -|y'|^2)_{\pm} ^{i\gam}\,  \vp(x-y)\, dy.\]
Note that
\[
|\Gam (1 +i\gam )|^2 =\frac{\pi \gam}{\sinh (\pi \gam)};\]
see, e.g., \cite{AS}. This gives  the following statement.

\begin{lemma} \label {ingq} For any $\gam \in \bbr$ and $\vp \in L^1(\rn)$,
\be\label{krtts}
||P^{1 +i\gam}_{\pm} \vp||_{\infty} \le e^{\pi |\gam|/2}\,  ||\vp||_1.   \ee
\end{lemma}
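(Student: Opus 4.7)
The plan is a pointwise bound on the integrand followed by the explicit formula for $|\Gam(1+i\gam)|$ recorded just above the statement. Because $\mathrm{Re}\,(1+i\gam) = 1 > 0$, the operator $P^{1+i\gam}_\pm$ acts on $L^1(\rn)$ by the classical Lebesgue integral formula, with no recourse to the distributional extension of Section~4. The first observation is that for any real $\gam$ and any positive real $X$ the convention $X^{i\gam} = \exp(i\gam\log X)$ forces $|X^{i\gam}| = 1$. Thus the kernel factor $(y_n - |y'|^2)^{i\gam}_\pm$ has modulus exactly $1$ on $\{\pm(y_n - |y'|^2) > 0\}$ and vanishes off this set; in particular it is bounded by $1$ everywhere.

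Bringing the modulus inside the integral and factoring out the scalar $1/\Gam(1+i\gam)$ then yields, for every $\vp \in L^1(\rn)$ and every $x \in \rn$,
\[
|(P^{1+i\gam}_\pm \vp)(x)| \le \frac{1}{|\Gam(1+i\gam)|}\intl_{\rn} |\vp(x - y)|\,dy = \frac{||\vp||_1}{|\Gam(1+i\gam)|},
\]
so $||P^{1+i\gam}_\pm \vp||_\infty \le |\Gam(1+i\gam)|^{-1}\, ||\vp||_1$.

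To finish I would verify the numerical bound $|\Gam(1+i\gam)|^{-1} \le e^{\pi|\gam|/2}$. By the identity $|\Gam(1+i\gam)|^2 = \pi\gam/\sinh(\pi\gam)$ cited in the excerpt (both sides are even in $\gam$, with value $1$ at $\gam = 0$), this is equivalent to
\[
\frac{\sinh(\pi|\gam|)}{\pi|\gam|} \le e^{\pi|\gam|}.
\]
The latter follows from the elementary chain $\sinh t \le t\cosh t \le t\,e^t$ for $t \ge 0$: the first inequality is obtained from $\sinh t = \intl_0^t \cosh s\,ds \le t\cosh t$ (since $\cosh$ is increasing on $[0,\infty)$), and the second from $\cosh t = (e^t + e^{-t})/2 \le e^t$. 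The case $\gam = 0$ reduces to the trivial $|\Gam(1)| = 1$.

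No substantive obstacle is expected: the argument is just a trivial-bound estimate combined with a standard special-function inequality. The one conceptual point worth flagging is that on $L^1(\rn)$ the operator $P^{1+i\gam}_\pm$ is defined by the absolutely convergent classical integral (the pointwise bound $1$ on the kernel, multiplied against $|\vp(x-y)| \in L^1$, makes Fubini and modulus-inside-integral automatic), so the estimate does not require invoking the $\Phi$-$\Phi'$ framework of the previous section.
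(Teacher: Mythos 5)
Your argument is correct and is essentially the paper's own: the paper likewise observes that for $\a=1+i\gam$ the kernel $(y_n-|y'|^2)_{\pm}^{i\gam}$ has modulus at most $1$, so the trivial bound gives $\|P^{1+i\gam}_{\pm}\vp\|_\infty\le \|\vp\|_1/|\Gam(1+i\gam)|$, and then invokes $|\Gam(1+i\gam)|^2=\pi\gam/\sinh(\pi\gam)$ to conclude. Your explicit verification of $\sinh t\le t\,\cosh t\le t\,e^{t}$ merely fills in the elementary step the paper leaves implicit, so there is nothing further to add.
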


\subsection{Interpolation}\label {Inter}

The  $L^2$-$L^2$ boundedness of $P_{\pm}^{\,\a}$ in the case $Re \,\a=(1-n)/2$ and the  $L^1$-$L^\infty$ boundedness of these operators with $Re \,\a=1$  on functions $\vp \in S$ pave the way to the  $L^p$-$L^q$ boundedness of $P_{\pm}^{\,\a}$ for intermediate values  of $\a$ via interpolation.
To this end, one may be tempted to  employ Stein's interpolation theorem for analytic families of operators \cite{Ste56}. This theorem can also be found in \cite {BS, Graf04,  SW} and many other sources. However,
the  hypotheses of this theorem and the  proof  are given in terms of simple functions, i.e. finite  linear combinations of the characteristic functions of disjoint compact sets.
It is unclear how to check these hypotheses for our operators, generated by
   distributions and defined on Schwartz functions.

A modification of Stein's  theorem, the hypotheses of which do not contain simple functions, was communicated by Grafakos \cite{Graf}.
In order to  formulate his result, we first make some assumptions and establish notation.

Given $0< p_0, p_1 \le \infty$ and $0<  q_0, q_1 \le \infty$, we  set
\be\label {wzee1}
\frac{1}{p}= \frac{1-\theta}{p_0} + \frac{\theta}{p_1}, \qquad \frac{1}{q}= \frac{1-\theta}{q_0} + \frac{\theta}{q_1}; \qquad 0<\theta <1.\ee
Denote
\[ {\bf S}=\{z \in \bbc : 0<  Re\, z < 1\}, \qquad {\bf \bar S}=\{z \in \bbc : 0\le Re\, z \le 1\}.\]
For $z \in  {\bf S}$, let $T_z$ be a family of linear operators mapping  $C_c^\infty (\rn)$  to $L^1_{loc} (\rn)$ and satisfying the following conditions.

\vskip 0.2 truecm

 (A) For all $\vp, \psi \in C_c^\infty (\rn)$, the function
\be\label {wzws2as}
\A (z)=\intl_{\rn}  (T_z \vp)(x) \psi (x)\, dx \ee
is analytic in  ${\bf S}$ and continuous  on  ${\bf \bar S}$.

\vskip 0.2 truecm

 (B) There exist constants $\gam \in [0, \pi)$ and $s\in (1, \infty]$, such that for any $\vp \in C_c^\infty (\rn)$ and any compact subset $K\subset \rn$,
\be\label {wzws2as1}
\log ||T_z \vp||_{L^s (K)}\le C \, e^{\gam |Im \, z|}\ee
for all $z\in {\bf \bar S}$ and some constant $C=C(\vp, K)$.

\begin {theorem} \label {lanse1} {\rm  \cite [Theorem 5.5.3]{Graf}}
 Let $T_z$, $z \in  {\bf \bar S}$, be a family of linear operators mapping  $C_c^\infty (\rn)$  to $L^1_{loc} (\rn)$ and satisfying  {\rm (A)} and  {\rm (B)} above.
 Suppose that there exist constants  $B_0$, $B_1$, and  continuous functions $M_0 (\gam)$, $M_1 (\gam)$ satisfying
\be\label {wzed1}
M_0 (\gam) + M_1 (\gam) \le \exp (c\, e^{\t |\gam|})\ee
with some constants $c\ge 0$ and $0\le \t <\pi$, such that
 \be\label {wzed} ||T_{i\gam}f ||_{q_0} \le B_0 M_0 (\gam) ||f||_{p_0}, \qquad ||T_{1+i\gam}f||_{q_1} \le B_1 M_1 (\gam) ||f||_{p_1} \ee
for all $\gam \in \bbr$.  Then  for all $f \in C_c^\infty (\rn)$,
\be\label {wzed2} ||T_{\theta}f ||_q  \le B_{\theta} M_{\theta} \,||f||_{p},\ee
where $B_{\theta} =B_0^{1-\theta} B_1^{\theta}$ and
\[ M_{\theta}\!=\!\exp \left \{ \frac{\sin (\pi\theta)}{2} \!\intl_{-\infty}^{\infty} \!\left [ \frac{\log M_0 (\gam)}{\cosh (\pi\gam) \!-\!\cos (\pi\theta)}\! + \!
\frac{\log M_1 (\gam)}{\cosh (\pi\gam)\! +\!\cos (\pi\theta)}\right ] d\gam \right \}.\]
\end{theorem}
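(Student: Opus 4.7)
The plan is to follow the classical analytic-family interpolation scheme of Stein, adapted as in \cite{Graf} so that only test functions in $C_c^\infty(\rn)$ intervene. By density and duality it suffices, for fixed $f,g \in C_c^\infty(\rn)$ with $||f||_p=||g||_{q'}=1$, to establish $|\lang T_\theta f,g\rang|\le B_\theta M_\theta$. Setting
$$\frac{1}{p(z)}=\frac{1-z}{p_0}+\frac{z}{p_1},\qquad \frac{1}{q(z)}=\frac{1-z}{q_0}+\frac{z}{q_1},$$
I would introduce interpolating families
$$f_z(x)=|f(x)|^{p/p(z)-1}f(x),\qquad g_z(x)=|g(x)|^{q'/q(z)'-1}g(x),$$
extended by zero at the zeros of $f,g$. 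A perturbation argument (replacing $f$ by $f+\del\zeta$ for a fixed $\zeta\in C_c^\infty(\rn)$ nonvanishing on a neighborhood of $\supp f\cup\supp g$, with a parallel adjustment for $g$, and letting $\del\to 0^+$ at the end) reduces to $f,g$ nonvanishing on their supports; then $f_z,g_z$ are smooth, uniformly compactly supported, pointwise analytic in $z$, with $f_\theta=f$, $g_\theta=g$ and $||f_{i\gam}||_{p_0}=||f_{1+i\gam}||_{p_1}=||g_{i\gam}||_{q_0'}=||g_{1+i\gam}||_{q_1'}=1$ for every $\gam\in\bbr$.

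The central object is
$$F(z)=\intl_{\rn}(T_z f_z)(x)\,g_z(x)\,dx.$$
Its analyticity on ${\bf S}$ and continuity on ${\bf \bar S}$ are derived from hypothesis (A): the bilinear form $(\vp,\psi)\mapsto \intl_{\rn}(T_z\vp)\psi\,dx$ is $z$-analytic on ${\bf S}$ and continuous on ${\bf \bar S}$ for each fixed pair in $C_c^\infty(\rn)$, and $z\mapsto (f_z,g_z)$ can be approximated uniformly on compact substrips by polynomial-in-$z$ combinations of a finite collection of $C_c^\infty(\rn)$ functions (using that $|f|^{p/p(z)-1}$ is pointwise entire on $\{f\ne 0\}$). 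Hypothesis (B), combined with H\"older on the compact $K=\supp f\cup\supp g$, furnishes an admissible envelope $|F(z)|\le \exp(C\,e^{\rho|Im\,z|})$ on ${\bf \bar S}$ for some $\rho<\pi$. On the two boundary lines, H\"older with the norm-one properties of $f_z,g_z$ and the hypothesized operator estimates yields
$$|F(i\gam)|\le B_0 M_0(\gam),\qquad |F(1+i\gam)|\le B_1 M_1(\gam).$$
The Poisson representation on the strip applied to $\log|F|$ then gives
$$\log|F(\theta)|\le \frac{\sin(\pi\theta)}{2}\intl_{-\infty}^\infty\left[\frac{\log|F(i\gam)|}{\cosh(\pi\gam)-\cos(\pi\theta)}+\frac{\log|F(1+i\gam)|}{\cosh(\pi\gam)+\cos(\pi\theta)}\right]d\gam,$$
and substituting the boundary bounds, using that the Poisson masses on $\{Re\,z=0\}$ and $\{Re\,z=1\}$ evaluated at $z=\theta$ integrate to $1-\theta$ and $\theta$, and exponentiating produces $|F(\theta)|\le B_0^{1-\theta}B_1^\theta M_\theta=B_\theta M_\theta$. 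Since $F(\theta)=\lang T_\theta f,g\rang$, this is the target bound.

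The main obstacle is the Poisson/three-lines step, because the hypothesis $M_0(\gam)+M_1(\gam)\le \exp(c\,e^{\t|\gam|})$ permits $\log M_j(\gam)$ to grow slightly faster than the single-exponential rate accommodated by the textbook Hadamard three-lines theorem. The remedy, which is precisely where the gap $\pi-\t>0$ is used, is to introduce a regularizer $h_\e(z)=\exp(-\e\cos(\sigma(z-1/2)))$ with $\t<\sigma<\pi$: on the boundary $|h_\e(z)|$ decays like $\exp(-\e\cos(\sigma/2)\cosh(\sigma|Im\,z|))$, dominating the permitted growth of $|F|$, while $h_\e(\theta)\to 1$ as $\e\to 0^+$. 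One then applies the classical Poisson representation for bounded analytic functions on the strip to $\log|h_\e F|$ and lets $\e\to 0^+$. The remaining technicalities---the perturbation reduction ensuring $f,g$ are nonvanishing on their supports, and the approximation underlying analyticity of $F$ itself---are comparatively routine.
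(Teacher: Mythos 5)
First, note that the paper does not prove this statement: it is imported verbatim with the citation \cite[Theorem 5.5.3]{Graf}, so there is no internal proof to compare against, and your proposal has to be judged against the standard Stein--Hirschman argument that the cited theorem adapts. The analytic-continuation machinery in your outline is correct: the Poisson kernels for the strip that you write down are the right ones (with masses $1-\theta$ and $\theta$ on the two boundary lines), and the regularizer $h_\e(z)=\exp(-\e\cos(\sigma(z-1/2)))$ with $\t<\sigma<\pi$ indeed satisfies $|h_\e(z)|\le\exp(-\e\cos(\sigma/2)\cosh(\sigma|Im\,z|))$ on ${\bf \bar S}$ and so absorbs the admissible growth $\exp(c\,e^{\t|\gam|})$; this is exactly the Hirschman-type extension of the three-lines lemma that the hypothesis $0\le\t<\pi$ is designed for.

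The gap sits at the point which is the entire reason this version of the theorem exists: the construction of the analytic function $F(z)=\lang T_z f_z, g_z\rang$ when $T_z$ is only defined on $C_c^\infty(\rn)$. The family $f_z=|f|^{p/p(z)-1}f$ is in general not in $C_c^\infty(\rn)$: wherever $f$ vanishes, the factor $|f|^{p/p(z)-1}$ is singular or at best H\"older, so $T_zf_z$ is simply undefined under the stated hypotheses. Your proposed repair does not close this: a nonzero continuous compactly supported function can never be nonvanishing on its support (it must vanish on the boundary of its support), and replacing $f$ by $f+\del\zeta$ merely relocates the zero set to $\partial(\supp \zeta)$, besides permitting cancellation for sign-changing or complex $f$. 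The subsequent claim that $z\mapsto(f_z,g_z)$ can be approximated uniformly on compact substrips by polynomial-in-$z$ combinations of finitely many fixed $C_c^\infty(\rn)$ functions is precisely where the work lies and is not justified: for exact finite combinations $\sum_j c_j(z)\vp_j$ with $c_j$ entire, analyticity of $F$ does follow from (A) and linearity, but to pass to the limit one needs uniform control of $T_z$ on the approximating sequence, and hypothesis (B) only provides a constant $C(\vp,K)$ depending on $\vp$ in an uncontrolled way. A complete proof must either build interpolating families that stay inside $C_c^\infty(\rn)$ (or inside a class on which $T_z$ is shown to act), or extend $T_z$ to the relevant closure in a topology compatible with (A) and (B); as written, the central object $F(z)$ of your argument is not defined.
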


\subsection {Proof of Theorem \ref{lanseT}}

The proof consists of two steps, namely, the ``if'' part and the ``only if'' part.

\vskip 0.2 truecm

\noindent {\bf STEP 1.}
We define
\be\label {wzeda}  T_z= P_{\pm}^{\,\a (z)}, \qquad \a(z)=\frac{1+n}{2} z + \frac{1-n}{2}.\ee
Then $0\le Re\, z \le 1$ corresponds to $(1-n)/2 \le Re\, \a \le 1$. In our case,
\[p_0=q_0=2, \qquad p_1=1, \qquad q_1=\infty.\]
If, for  real $\a \in [(1-n)/2, 1]$, we set $\a=((1+n)/2) \theta + (1-n)/2$,  then $\theta=(2\a+n-1)/(n+1)$, and (\ref{wzee1}) yields
\[ 1/p= (\a +n)/(n+1), \qquad 1/q=(1-\a)/(n+1).\]
The latter agrees with (\ref{wz2as}).

To apply  Theorem \ref {lanse1}, we must show that the operator families $T_z= P_{\pm}^{\,\a (z)}$ meet all hypotheses of Theorem \ref {lanse1}.

First, we note that by Lemma \ref{Digf1} (i), $T_z$ maps $S (\rn)$ to $C^\infty (\rn)$, and therefore the required mapping
 $T_z: C_c^\infty (\rn) \to L^1_{loc} (\rn)$ is valid for all $z\in \bbc$, not only for $z\in {\bf S}$.
To  check (A), for any $\vp, \psi \in C_c^\infty (\rn)$, we have
\[
\A (z)=\intl_{\rn}  (T_z \vp)(x) \psi (x)\, dx=\intl_{{\rm supp} (\psi)} \!\!\!F (x,z)\,dx,\]
where
\[ F (x,z)= (T_z \vp)(x) \psi (x)= (P_{\pm}^{\,\a (z)} \vp)(x)\psi (x)\]
is an entire function of $z$, which is $C^\infty$ in the $x$-variable; see Lemma \ref{side}(i) and Lemma \ref{Digf1}(i). This gives (A).

To check (B), we make use of the first equality in (\ref{Ded}) combined with (\ref{008d}). We obtain
\[
(P_{\pm}^{\,\a} \vp)(x)=(\pm 1 )^{k} (P_{\pm}^{\,\a +k} \partial_n^k  \vp)(x)=\frac{(\pm 1 )^{k}}{\Gam (\a +k)}\intl_0^\infty \!s ^{\a+k -1} A^{(k)}_{x,\pm} (s)\,ds,\]
\[
 A^{(k)}_{x,\pm} (s)=\intl_{\bbr^{n-1}} \!\!\!  (\partial_n^k  \vp) (x' -y', x_n \mp s - |y'|^2)\, dy'.\]
Choose $k>  -Re \, \a$ and estimate the obtained expression in the ``+'' case. For any positive integer $m$, there is a constant $c=c(k,m, \vp)$ such that
\[
|A^{(k)}_{x,+} (s)|\le c \intl_{\bbr^{n-1}} \frac{dy'}{(1+  |x' -y'|)^m (1+  |x_n - s - |y'|^2|)^m}.\]
Note that
\be\label{kanzz} \frac{1}{1+  |x' -y'|}\le \frac{1+  |x' -y'|+|x'|}{(1+  |x' -y'|) (1+  |y'|)}\le \frac{1+  |x'|}{1+  |y'|}.\ee
Similarly,
\bea \frac{1}{1+  |x_n - s - |y'|^2|}&=& \frac{1+s+ |y'|^2}{1+  |x_n - s - |y'|^2|}\times  \frac{1}{1+s+ |y'|^2}\nonumber\\
&\le& \left (1+\frac{|x_n|}{1+  |x_n - s - |y'|^2|}\right ) \frac{1}{1+s}
\le \frac{1+  |x_n|}{1+ s}.\nonumber\eea
Hence
\[|A^{(k)}_{x,+} (s)|\le c \left (\frac {(1+  |x'|)(1+  |x_n|)}{1+s}\right )^m \intl_{\bbr^{n-1}} \frac{dy'}{(1+  |y'|)^m }\le c' \,\frac{(1+  |x|)^{2m}}{(1+s)^m}\]
for some constant $c'=c'(k,m, \vp)$ with $k>  -Re \, \a$ and $m>n-1$.
It follows that
\[
|(P_{+}^{\,\a} \vp)(x)|\le \frac{c'\,(1+  |x|)^{2m}}{|\Gam (\a +k)|} \intl_0^\infty \frac{s^{Re \, \a+k -1}}{(1+s)^m} \,ds<\infty\]
provided $-k< Re\, \a < m-k$. In our case, $(1-n)/2 \le Re\, \a \le 1$. Choosing $k> (n-1)/2$ and $m>\, \max (k+1, n-1)$ we make this inequalities compatible, and for any compact set $K\subset \rn$ obtain
\be\label{kanzz1}
||P_{+}^{\,\a} \vp||_{L^\infty (K)}\le \frac{C}{|\Gam (\a +k)|}, \qquad C=C (K, \vp).\ee

 The same inequality can be obtained  for $P_{-}^{\,\a} \vp$ if we slightly change calculations. Specifically, for any  positive integers $m_1$ and $m_2$,
 \[
|A^{(k)}_{x,-} (s)|\le c \intl_{\bbr^{n-1}} \frac{dy'}{(1+  |x' -y'|)^{m_1} (1+  |x_n + s - |y'|^2|)^{m_2}},\]
 where the first factor is estimated as in (\ref{kanzz}) and for the second one we have
 \bea \frac{1}{1+  |x_n + s - |y'|^2|}&=& \frac{1+|s- |y'|^2|}{1+  |x_n + s - |y'|^2|}\times  \frac{1}{1+|s- |y'|^2|}\nonumber\\
&\le& \frac {1+|x_n|}{1+  |s - |y'|^2|}= \frac{(1+|x_n|)(1+s)}{1+  |s - |y'|^2|}\times  \frac{1}{1+s}\nonumber\\
&\le& \frac{(1+|x_n|)(1+ |y'|^2)}{1+ s}.\nonumber\eea
 This gives
\[|A^{(k)}_{x,-} (s)|\le c \,\frac {(1+  |x'|)^{m_1}(1+  |x_n|)^{m_2}}{(1+s)^{m_2}} \intl_{\bbr^{n-1}} \frac{(1+  |y'|^2)^{m_2}}{(1+  |y'|)^{m_1} } dy'.\]
  Choosing $k$, $m_1$, and $m_2$ so that
  \[-k< Re\, \a < m_2-k, \qquad m_1> 2m_2 +n-1,\] we obtain
\[
|(P_{-}^{\,\a} \vp)(x)|\le \frac{c'\,(1+  |x|)^{m_1 +m_2}}{|\Gam (\a +k)|} \intl_0^\infty \frac{s^{Re \, \a+k -1}}{(1+s)^{m_2}} \,ds<\infty.\]
This gives an analogue of (\ref{kanzz1}) for $P_{-}^{\,\a} \vp$. Assuming additionally   $k> (n-1)/2$ and $m_2> k+1$, we make our reasoning compatible with $(1-n)/2 \le Re\, \a \le 1$.

Now, let us revert to the notation of the interpolation theorem.
If $\a=\a (z)$ and $T_z \vp = P_{\pm}^{\,\a(z)} \vp$, then for all $0\le Re\, z \le 1$, the  inequality (\ref{kanzz1}) and its analogue for  $P_{-}^{\,\a} \vp$ give
\[
 ||T_z \vp||_{L^\infty (K)}\le \frac{C}{|\Gam (\z)|}, \qquad \z=\frac{1+n}{2} z + \frac{1-n}{2} +k.\]
If $z=x+iy$, then $\z=a+ib$, where
\[
 a= \frac{1+n}{2}\, x + \frac{1-n}{2} +k, \qquad b=\frac{1+n}{2} \,y; \qquad 0\le x\le 1.\]
It is known that if  $a_1\le a\le a_2$ and $|b| \to \infty$, then
\be\label {quad}
|\Gam (\z)|= \sqrt {2\pi} \, |b|^{a-1/2}\,  e^{-\pi |b|/2}\,  [1 +O (1/|b|)],\ee
where the constant implied by $O$ depends only on $a_1$ and $a_2$; see, e.g., \cite [Corollary 1.4.4]{AAR}.
Taking into account that $1/\Gam (\z)$ is an entire function and using (\ref{quad}), after simple calculations we obtain an estimate of the form
\[ \log ||T_z \vp||_{L^{\infty} (K)}\le \left \{
\begin{array} {ll} \! c_1&\mbox{ if $|y|\le 10$},\\
c_2 +c_3 \log |y| +c_4 |y|&\mbox{ if $|y|\ge 10$},\\
 \end{array}
\right.\]
for some positive constants $c_1, c_2, c_3, c_4$.
This estimate yields (\ref {wzws2as1})  for any $\gam >0$. Thus
 verification of the assumption (B) for Theorem \ref {lanse1} is complete.

Let us check (\ref{wzed1}). By (\ref{wzeda}),
\[  T_{i\gam}=P_{\pm}^{(1-n)/2 +i\gam (1+n)/2}, \qquad  T_{1+i\gam}=P_{\pm}^{1 +i\gam (1+n)/2}.\]
Hence the results of Lemmas \ref{ing} and \ref{ingq} can be stated as
\[ ||T_{i\gam}\vp ||_{2} \le B_0 M_0 (\gam) ||\vp||_{2}, \qquad ||T_{1+i\gam} \vp||_{\infty} \le B_1 M_1 (\gam) ||\vp||_{1}, \]
\[ M_0 (\gam)=M_1 (\gam)=\exp (\pi (1+n)|\gam|/4),\]
with some constants $B_0, B_1$. These estimates yield (\ref{wzed1}).

Thus, by Theorem \ref {lanse1}, the ``if'' part of Theorem \ref{lanseT} is proved for $P^\a_{\pm}$ when $\a$ is real.
If $\a$ has a nonzero  imaginary part, say, $t$, the above reasoning can be repeated almost verbatim if we re-define $T_z$ in (\ref{wzeda})  by setting $T_z= P_{\pm}^{\,\a (z+it)}$.

%The validity of the ``if'' part for the operators $T^\a_{\pm}$ follows from  (\ref{bars}) and
% Lemma \ref{swaw} in view of Remark \ref{trew}.

\vskip 0.2 truecm

\noindent {\bf STEP 2.} Let us prove the ``only if'' part of Theorem \ref{lanseT}.
First we show that the left bound $Re \, \a= (1-n)/2$ in (\ref{wz2as}) is sharp.
Suppose the contrary,  assuming for simplicity that $\a$ is real.  Then there is  a triple $(p_0, q_0, \a_0)$ with $1\le p_0< q_0 \le \infty$ and $\a_0<(1-n)/2$, such that  at least one of the operators $P^{\a_0}_{\pm}$, say $P^{\a_0}_{+}$, is bounded from  from $L^{p_0}(\rn)$ to $L^{q_0}(\rn)$.
Then, interpolating the  triples  $(p_0, q_0, \a_0)$ and $(1, \infty, 1)$, as we did above, we conclude that for any $\a \in [\a_0, 1]$, the operator $P^\a_{+}$ is bounded from  from $L^{p_\a}(\rn)$ to $L^{q_\a}(\rn)$, where
\[ \frac{1}{p_\a}= \frac{\a -\a_0}{ 1 -\a_0} + \frac{1-\a }{ p_0 (1 -\a_0)}, \qquad  \frac{1}{q_\a}=  \frac{1-\a }{ q_0 (1 -\a_0)}.\]
 In particular, for $\a= (1-n)/2$ and $\a_0=(1-n)/2 -\e$, $0<\e< (1+n)/2$, we obtain
\[ p_{(1-n)/2}=\frac{2(1+n)}{1+n -2\e}, \qquad q_{(1-n)/2}=\frac{2(1+n)}{1+n +2\e}.\]
The latter agrees with the known $L^2$-$L^2$ boundedness of $P^{(1-n)/2}_{+}$ only if $\e=0$, that is, $\a_0=(1-n)/2$.

The necessity of $p$ and $q$ in (\ref{wz2as}) can be proved using the  scaling argument, which is applied to the equivalent operator families
\be\label {99jh1}
 (T_{\pm}^\a f)(x= \frac{1}{\Gam (\a)}\intl_{\rn} (x_n -y_n)_{\pm}^{\a -1} f(y', y_n + x' \cdot y')\, dy, \quad  Re \, \a> 0.\ee
The limiting case $\a=0$ in (\ref{99jh1}) yields the  transversal Radon transform
 (\ref{bart}). There is a  remarkable connection between  $P^\a_{\pm}f$ and $T_{\pm}^\a f$, which  is realized by  the maps
\be \label {barsd}
(B_1f)(x)=f(x', x_n \!-\!|x'|^2), \quad (B_2 F)(x)=F(2x',  x_n \!-\!|x'|^2).\ee
Their  inverses  have the form
\be \label {bars1} (B_1^{-1}u)(x) \! = \!u(x', x_n +|x'|^2), \quad (B_2^{-1} v)(x)\!=\!v \left (\frac{x'}{2},  x_n +\frac{|x'|^2}{4}\right ).\ee
One can easily show that
\be\label {iar} ||B_1f||_p =||f||_p, \qquad \| B_2 F \|_{q} = 2^{(1-n)/q} \,\|F \|_{q}.\ee

\begin{lemma}\label {swawd} {\rm \cite [Lemma 7.5] {Ru22a}}  The equality
\be \label {btsw}  P_{\pm}^{\,\a} f=B_2 T_{\pm}^\a B_1 f, \qquad Re \,\a >0,\ee
holds,  provided that either side  of it exists in the Lebesgue sense. If $f\in S(\rn)$, then (\ref{btsw}) extends to all $\a \in\bbc$ by analytic continuation.
\end{lemma}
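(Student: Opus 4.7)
The plan is to verify the identity $P_\pm^{\,\alpha} f = B_2\, T_\pm^{\alpha}\, B_1 f$ for $\operatorname{Re}\alpha>0$ by a direct change of variables, and then extend to all $\alpha\in\bbc$ by analytic continuation.

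First I would unfold the right-hand side. Writing $\tilde x=(\tilde x',\tilde x_n)=(2x',x_n-|x'|^2)$, the definitions give
\[
 (B_2 T_\pm^{\alpha} B_1 f)(x)=\frac{1}{\Gam(\a)}\intl_{\rn} (\tilde x_n-y_n)_\pm^{\a-1}\, f(y',\,y_n+\tilde x'\cdot y'-|y'|^2)\,dy.
\]
The goal is to transform this into the defining integral of $P_\pm^{\,\a} f$. I would introduce new variables $w=(w',w_n)$ by
\[
 w'=x'-y',\qquad w_n=x_n-y_n-|x'|^2+|w'|^2,
\]
so that $y'=x'-w'$ and $y_n=x_n-w_n-|x'|^2+|w'|^2$. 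The Jacobian of this affine-plus-quadratic substitution is $\pm 1$, so $dy=dw$. A direct algebraic check (expanding $2x'\cdot y'-|y'|^2$ with $y'=x'-w'$) shows that $y_n+\tilde x'\cdot y'-|y'|^2=x_n-w_n$, and simultaneously $\tilde x_n-y_n=w_n-|w'|^2$. Hence the argument of $f$ becomes $(x'-w',x_n-w_n)=x-w$, and the kernel $(\tilde x_n-y_n)_\pm^{\a-1}$ becomes $(w_n-|w'|^2)_\pm^{\a-1}$. Substituting back yields
\[
 (B_2 T_\pm^{\alpha} B_1 f)(x)=\frac{1}{\Gam(\a)}\intl_{\rn}(w_n-|w'|^2)_\pm^{\a-1}f(x-w)\,dw=(P_\pm^{\,\a}f)(x),
\]
which is the required equality, valid whenever either side converges absolutely (indeed, whenever either side exists in the Lebesgue sense, by Fubini-Tonelli and the positivity of the integrand when $\a>0$ is real, with standard absolute-convergence arguments extending to $\operatorname{Re}\a>0$).

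For the extension to $f\in S(\rn)$ and arbitrary $\a\in\bbc$, I would argue as follows. By Lemma \ref{side}(i) applied to $f$, the map $\a\mapsto (P_\pm^{\,\a}f)(x)$ is entire for each fixed $x$. An analogous statement holds for $T_\pm^{\a}$: writing the inner $y_n$-integral as a Riemann-Liouville integral in one variable, $(T_\pm^{\a} g)(\tilde x)$ extends as an entire function of $\a$ for every Schwartz $g$, and in particular for $g=B_1 f$, since $B_1$ preserves $S(\rn)$ (it is an affine-plus-quadratic change of coordinates in the last variable). Composition with the fixed $\a$-independent operator $B_2$ does not affect analyticity, so both sides of \eqref{btsw} are entire in $\a$. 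Having verified their equality on the half-plane $\operatorname{Re}\a>0$, the identity principle yields equality for all $\a\in\bbc$.

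The only non-routine step is the change of variables: one must notice that the quadratic shift $y_n\mapsto y_n-|x'|^2+|w'|^2$ simultaneously linearizes the argument of $f$ (converting $y_n+2x'\cdot y'-|y'|^2$ into $x_n-w_n$) and reshapes the kernel support $\tilde x_n-y_n>0$ into the paraboloidal condition $w_n-|w'|^2>0$. Once the correct substitution is written down, the rest is algebraic bookkeeping and a standard analytic-continuation argument.
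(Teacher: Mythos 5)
Your proof is correct and follows essentially the same route as the paper: a direct change of variables identifying the parabolic kernel $(w_n-|w'|^2)_\pm^{\a-1}$ with the transversal kernel $(\tilde x_n-y_n)_\pm^{\a-1}$, followed by analytic continuation in $\a$. The only cosmetic difference is direction — the paper verifies $B_2^{-1}P_\pm^{\,\a}f = T_\pm^{\a}B_1 f$ by reducing both sides to a common iterated integral, whereas you transform $B_2T_\pm^{\a}B_1f$ forward into $P_\pm^{\,\a}f$ — and your substitution checks out (the Jacobian is triangular with unit modulus determinant, and the algebra $y_n+2x'\cdot y'-|y'|^2=x_n-w_n$, $\tilde x_n-y_n=w_n-|w'|^2$ is right).
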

\begin{proof}  We recall the proof for the sake of completeness. In the ``$+$'' case we have
\bea
&&(B_2^{-1}P_{+}^{\,\a} f)(x)\!= \!\frac{1}{\Gam (\a)} \intl_{\bbr^n}(y_n \!-\!|y'|^2)_{+} ^{\a -1} \,f \left( \frac{x'}{2}\!- \!y', x_n \!+\! \frac{|x'|^2}{4} \! -\! y_n \right) dy,\qquad\nonumber\\
&& =\frac{1}{\Gam (\a)}  \intl_{\bbr^{n-1}} dy' \intl_0^\infty s^{\a -1} f \left( \frac{x'}{2}- y', x_n + \frac{|x'|^2}{4}  - s- |y'|^2\right ) ds\nonumber\\
\label {mmnz}&& =\frac{1}{\Gam (\a)}  \intl_0^\infty s^{\a -1} ds  \intl_{\bbr^{n-1}}  f (z', x_n -s + x'\cdot z'- |z'|^2)\, dz'.\eea
On the other hand,
\bea
&&(T_{+}^\a B_1f)(x)=\frac{1}{\Gam (\a)}\intl_{\rn} (x_n -y_n)_{+}^{\a -1} (B_1f)(y', y_n + x' \cdot y')\, dy\nonumber\\
&&= \frac{1}{\Gam (\a)}  \intl_0^\infty s^{\a -1} ds \intl_{\bbr^{n-1}}  f (y',  x_n -s + x'\cdot y'- |y'|^2)\, dy',\nonumber\eea
which coincides with (\ref{mmnz}). In the ``$-$'' case, the proof is similar.
\end{proof}

An analogue of (\ref{btsw}) for the Radon transforms (\ref{ppar}) and (\ref{bart}) can be found in \cite [Lemma 2.3]{Chr} and \cite[Lemma 3.2]{Ru22}.

Let us continue the proof of the main theorem.
 In view of (\ref{barsd}) and
 Lemma \ref{swawd}, it suffices to work  with   $T^\a_{\pm}$ in place of  $P^\a_{\pm}$, where the case $\a=0$ corresponds to the Radon transform  (\ref{bart}). Let, for example $\a$ be real.   For $\lam =(\lam_1, \lam_2)$, $\lam_1 >0$, $\lam_2 >0$,  denote
\[
(A_\lam f)(x)=f (\lam_1 x', \lam_2 x_n), \qquad (B_\lam F)(x)=\frac{\lam_1^{1-n}}{\lam_2^\a } F\left (\frac{\lam_2}{\lam_1}\, x', \lam_2 x_n\right ). \]
Then $T_{\pm}^\a A_\lam f = B_\lam T_{\pm}^\a f$ and we have
\[ ||A_\lam f||_p=\lam_1^{(1-n)/p}\lam_2^{-1/p} ||f||_p, \qquad ||B_\lam F||_q=\lam_1^{1-n+(n-1)/q}\lam_2^{-\a-n/q} ||F||_q.\]
If $|| T^\a_{\pm} f||_q \le c \,||f||_p$ is true for all $f\in L^p$, then it is true for $A_\lam f$, that is, $||T^\a_{\pm} A_\lam f||_{q}\le c \,||A_\lam f||_{p}$ or
$||B_\lam T_{\pm}^\a f||_{q}\le c \,||A_\lam f||_{p}$.
 The latter is equivalent to
\[
\lam_1^{1-n+(n-1)/q}\lam_2^{-\a-n/q} ||T_{\pm}^\a f||_q   \le c \,\lam_1^{(1-n)/p}\lam_2^{-1/p} ||f||_p.\]
Assuming that $\lam_1 $ and $\lam_2 $ tend to zero and to infinity, we conclude that the last inequality is possible only if
\be\label {hghfv}
p=\frac{n+1}{n + \a}, \qquad q=\frac{n+1}{1- \a}.\ee
The above  reasoning  shows that the right bound $Re \,\a= 1$ is sharp, too.

Now, the proof of  Theorem \ref{lanseT} is complete.

\section{Conclusion}

Some comments are in order.

\vskip 0.2 truecm

\noindent  {\bf 1.} In the present article we explored parabolic convolutions associated with the Riemann-Liouville operators   (\ref{poqa}). The same method can be applied to convolutions of the Riesz type
\[
(P^{\,\a} f)(x)=\frac{1}{\gamma (\a)} \intl_{\bbr^n}|y_n -|y'|^2|^{\a -1} \,f(x-y)\, dy,\]
\[
(P_s^{\,\a} f)(x)=\frac{1}{\gamma' (\a)} \intl_{\bbr^n}|y_n -|y'|^2|^{\a -1}\, \sgn (y_n -|y'|^2)\,f(x-y)\, dy,\]
where
\[ \gamma (\a)= 2\Gamma(\a) \,\cos (\a\pi/2), \qquad \gamma'(\a)=2i\Gam (\a) \sin (\a \pi/2);\]
cf.  \cite [Chapter I, Section 3]{GS1},  \cite {SKM}.

\vskip 0.2 truecm

\noindent  {\bf 2.}  Theorem \ref {lanseT} guarantees the existence of the $L^p$-$L^q$ bounded extensions of the operators $P_{\pm}^{\,\a}$ provided by  interpolation. We denote these extensions by $\P_{\pm}^{\,\a}$. It is natural to ask:

 {\it What are the  explicit analytic formulas for the $L^q$-functions $\P_{\pm}^{\,\a}f$?}

The case $Re\, \a \le 0$ is especially intriguing because our operators are not represented by absolutely convergent integrals and need a suitable $L^q$-regularization.
  A similar question for solutions of the wave equation was studied in \cite{Ru89}.

  Using the same reasoning as in \cite{Ru22a}, one can show that in the cases $\a=0$ (for the Radon transform $P$) and $0< Re \, \a <1$, we have $(\P_{\pm}^{\,\a}f)(x)=(P_{\pm}^{\,\a}f)(x)$ for almost $x$. If $(1-n)/2 \le Re\, \a \le 0$  with $Im \, \a \neq 0$, the expression $(\P_{\pm}^{\,\a}f)(x)$ can be represented by  hypersigular integrals, converging in the $L^q$-norm and in the a.e. sense. We leave the details to the interested reader.

 \vskip 0.2truecm

\noindent  {\bf 3.}  Analytic families of fractional integral of convolution type arise in the context of non-Euclidean harmonic analysis, when the concept of the convolution is determined by the corresponding Lie group of motions. Examples of such convolutions on the unit sphere and the hyperbolic space can be found, e.g., in \cite {Ru15, Str70, Str81}. It might be of interest to adjust the hypotheses of  Stein's interpolation theorem from \cite{Ste56}
for these cases and obtain the corresponding $L^p$-$L^q$ estimates (or complete the existing proofs).

\vskip 0.2 truecm

\noindent {\bf Acknowledgement.} The author is grateful to Professor Loukas Grafakos for useful discussions and sharing his knowledge of the subject.

\section{Appendix}

Let us  show that the functions  $p_{\a \pm}$, $Re\, \a >0$, can be viewed as regular   tempered distributions.
Given a test function  $\vp \in S (\rn)$, for any positive integers $\ell$ and $m$ there is a constant $c_{\ell, m}$ such that
\[ \sup\limits_{y}\, (1+|y'|)^\ell  (1+|y_n|)^m |\vp (y', y_n)|\le c_{\ell, m}.\]
Hence, setting $\a_0= Re \, \a >0$ and
passing to polar coordinates, for sufficiently large  $\ell$ and $m$ we have
\bea
|\lang p_{\a +}, \vp \rang| &\le& \frac{1}{|\Gam (\a)|} \intl_0^\infty r^{n-2} dr \intl_{r^2}^\infty  (y_n - r^2)^{\a_0 -1} dy_n \intl_{S^{n-2}} |\vp (r \theta, y_n)| d\theta\nonumber\\
&\le& \frac{\sig_{n-2 }\,c_{\ell, m}}{|\Gam (\a)|} \intl_0^\infty \frac{r^{n-2}\, dr}{(1+r)^\ell} \intl_{r^2}^\infty \frac{(y_n - r^2)^{\a_0 -1}}{(1+y_n)^m}\, dy_n \nonumber\\
&\le& \frac{\sig_{n-2 }\,c_{\ell, m}}{|\Gam (\a)|}  \intl_0^\infty \frac{r^{n-2}\, dr}{(1+r)^\ell}
 \intl_{0}^\infty \frac{s^{\a_0 -1}}{(1+s)^m}\, ds <\infty.\nonumber\eea
For $p_{\a -}$, the calculations are a little bit more sophisticated. We have
\bea
|\lang p_{\a -}, \vp \rang| &\le& \frac{1}{|\Gam (\a)|} \intl_0^\infty r^{n-2} dr \intl^{r^2}_{-\infty}  (r^2 -y_n)^{\a_0 -1} dy_n \intl_{S^{n-2}} |\vp (r \theta, y_n)| d\theta\nonumber\\
&\le& \const  \intl_0^\infty \frac{r^{n-2}\, dr}{(1+r)^\ell} \intl_{0}^\infty \frac{s^{\a_0 -1}}{(1+ |r^2 -s|)^m}\, ds \le \const (I_1 +I_2).\nonumber\eea
Here
\[
I_1 = \intl_0^\infty \frac{r^{n-2}\, dr}{(1+r)^\ell} \intl_{0}^{r^2} \frac{s^{\a_0 -1}}{(1+ r^2 -s)^m}\, ds \le   \intl_0^\infty \frac{r^{n-2}\, dr}{(1+r)^\ell} \intl_{0}^{r^2} s^{\a_0 -1} ds <\infty\]
if $\ell$ is large enough. Further,
\[
I_2= \intl_0^\infty \frac{r^{n-2}\, dr}{(1+r)^\ell} \intl^{\infty}_{r^2} \frac{s^{\a_0 -1}}{(1+ s-r^2)^m}\, ds=
\intl_0^\infty \frac{r^{n-2}\, dr}{(1+r)^\ell} \intl^{\infty}_{0} \frac{ (r^2 +t)^{\a_0 -1}}{(1+ t)^m}\, dt.\]
If $\a_0 \le 1$, then
\[
I_2\le \intl_0^\infty \frac{r^{n-2}\, dr}{(1+r)^\ell} \intl^{\infty}_{0} \frac {dt}{t^{1-\a_0}\,(1+ t)^m} <\infty.\]
If $\a_0 > 1$, then $I_2=I_{2,1} + I_{2,2}$, where $I_{2,1} = \intl_0^1 (...)\, dr$,  $I_{2,2} = \intl_1^\infty (...) \,dr$,
\[
I_{2,1} \le  \intl_0^1 \frac{r^{n-2}\, dr}{(1+r)^\ell}  \intl^{\infty}_{0}  (1+t)^{\a_0 -1-m} dt <\infty,\]
\bea
I_{2,2} &\le&  \intl_1^\infty \frac{r^{n-2}}{(1+r)^\ell} \left (\intl^{r}_{0}  (r^2 +t)^{\a_0 -1} dt + \intl_{r}^{\infty} \frac{(r^2 +t)^{\a_0 -1}}{t^m} dt \right )\nonumber\\
&=&  \intl_1^\infty \frac{r^{n-2}}{(1+r)^\ell} \, (c_1 r^{2\a_0} + c_2 r^{2\a_0 - m} )\, dr <\infty.\nonumber\eea
Thus $p_{\a -} \in S' (\rn) $ for all $ Re \, \a >0$.


\begin{thebibliography}{[ASMR]}

\bibitem  {AS} M. Abramowitz and I.A. Stegun (ed.) {\it Handbook of mathematical functions with formulas, graphs, and mathematical tables},  US Department of Commerce, National Bureau of Standards, 10th Printing, December 1972.

\bibitem  {AAR}  G.E. Andrews, R. Askey, and R. Roy,  {\it Special functions},
 Cambridge University Press, 2001.

\bibitem  {BS} C. Bennett and R. Sharpley, {\it Interpolation of operators}, Academic Press, Inc., 1988.

\bibitem  {BK}  A.L. Buhgeim and V.B. Kardakov,  Solution of an inverse problem for an elastic wave equation by the method of spherical means (in Russian) {\it Sibirsk. Mat. Ž.} {\bf 19} (1978), no. 4, 749--758, 953.

\bibitem  {Chr} M. Christ,  Extremizers of a Radon transform inequality,  \textit{Advances in analysis: the legacy of Elias M. Stein}, 84--107, Princeton Math. Ser., 50,  Princeton, NJ: Princeton Univ. Press, 2014.

\bibitem {Es}  G.I. Eskin,  \textit{Boundary value problems for elliptic
 pseudodifferential equations}. Amer. Math. Soc., Providence, R.I.,  1981.

\bibitem {GS1} I.M. Gelfand and G.E. Shilov, \textit{Generalized functions, vol. 1. Properties and  operations}, Academic
Press, New York, 1964.

\bibitem {GS2} \bysame, \textit{Generalized functions, Vol. 2. Spaces of fundamental and generalized functions},  Academic Press, New York,  1968.


\bibitem {Graf04}  L. Grafakos,  \textit{Classical and modern Fourier analysis},  Pearson Education Inc., Prentice Hall, 2004.

\bibitem  {Graf} \bysame, Fundamentals of Fourier analysis,  Springer GTM series, to appear, 2023.

\bibitem  {GO} L. Grafakos and E.M. Ouhabaz, Interpolation for analytic families of multilinear
operators on metric measure spaces, {\it Studia Mathematica} {\bf 267} (2022) , 37--57.

\bibitem {H65}  S. Helgason,  The Radon transform on Euclidean spaces, compact two-point
homogeneous spaces and Grassmann manifolds,  \textit{Acta  Math.} \textbf{113}  (1965), 153--180.


\bibitem {Litt}   W. Littman,  $L^p$ - $L^q$ - estimates for singular integral operators arising from hyperbolic equations,   \textit{Partial differential equations (Proc. Sympos. Pure Math., Vol. XXIII, Univ. California, Berkeley, Calif., 1971)}, pp. 479–-481, Providence, R.I.:  Amer. Math. Soc., 1973.

\bibitem {Liz} P.I. Lizorkin,   Generalized Liouville differentiation and functional spaces $L_p^r(E_n)$. Imbedding theorems [in Russian] \textit{Matem. Sb.} {\bf 60}(120) (1963), 325--353.


\bibitem  {NR} E.K. Narayanan and  Rakesh,
Spherical means with centers on a hyperplane in even dimensions,  \textit{Inverse Problems} {\bf 26} (2010), no. 3, 035014, 12 pp.

\bibitem  {OS} D.M. Oberlin and E.M. Stein,  Mapping properties of the Radon transform, \textit{Indiana Univ. Math. J.} \textbf{31} (1982), 641--650.

\bibitem  {Ru89}  B. Rubin,  Multiplier operators connected with the Cauchy problem for the wave equation. Difference regularization (in Russian) {\it Mat. Sb.} {\bf 180} (1989), no. 11, 1524--1547, 1584; translation in {\it Math. USSR-Sb.}  {\bf 68} (1991), no. 2, 391--416.

\bibitem  {Ru96}   \bysame, {\it Fractional integrals and potentials}, Longman, Harlow, 1996.


\bibitem  {Ru12}  \bysame, The  Radon transform on the Heisenberg group and the transversal Radon transform,    {\it J. of Funct. Analysis}, {\bf 262} (2012), 234--272.

\bibitem  {Ru15} \bysame, {\it Introduction to  Radon transforms: With elements of fractional calculus  and harmonic analysis}, Cambridge University Press, 2015.

\bibitem  {Ru22} \bysame,  A note on   the sonar transform and related Radon transforms,   	arXiv:2206.05854 [math.FA].

\bibitem  {Ru22a} \bysame, Fractional integrals associated with Radon transforms,  	arXiv:2208.09080 [math.FA].

\bibitem  {Sam77}  S.G. Samko,  Test functions vanishing on a given set, and division by a function, {\it Mat. Zametki} (5) {\bf 21} (1977), 677--689.

\bibitem  {Sam}   \bysame, \textit{Hypersingular integrals and their applications}, Taylor \& Francis, Series: Analytical Methods and Special Functions, Volume 5, 2002.

\bibitem {SKM}  S.G. Samko, A.A. Kilbas, and O.I. Marichev, \textit{Fractional integrals and derivatives. Theory and applications.}
   Gordon and Breach Sc. Publ., New York, 1993.

\bibitem  {Sem} V.I. Semyanistyi, On some integral transformations in Euclidean space (in Russian), \textit{Dokl. Akad. Nauk SSSR,}   \textbf{134} (1960),  536--539.


\bibitem  {Ste56} E.M. Stein, Interpolation of linear operators, {\it Trans. Amer. Math. Soc.} {\bf 83} (1956), 482–-492.

\bibitem {SW}  E.M. Stein and  G. Weiss, \textit{Introduction to Fourier analysis on Euclidean spaces.} Princeton Univ. Press, Princeton, NJ, 1971.

\bibitem  {Str70} R.S. Strichartz, Convolutions with kernels having singularities on a sphere. Trans. Amer. Math. Soc. 148 (1970), 461–-471.;

\bibitem {Str81} \bysame,   $L^p$-estimates for Radon transforms in Euclidean  and non-euclidean spaces.  \textit{Duke Math. J.} \textbf{ 48}
(1981),  699--727.

\bibitem  {Str}  \bysame, $L^p$ harmonic analysis and Radon transforms on the Heisenberg group, {\it J. Funct. Anal.} {\bf 96} (1991)
350-–406.

\bibitem  {Tao} T. Tao, {\it Stein’s interpolation theorem},\\
 https://terrytao.wordpress.com/2011/05/03/steins-interpolation-theorem/, 3 May 2011.


\bibitem  {Vl} V.S. Vladimirov, \textit{Methods of the theory of generalized functions},  Taylor \& Francis, London and New York, 2003.

\bibitem  {Yos} K. Yoshinaga, On Liouville's differentiation, {\it Bull Kyushu Inst. Technol. Math Natur sci.} {\bf 11} (1964), 1--17.

\end{thebibliography}
\end{document}